\documentclass[11pt,a4paper]{article}

\usepackage[T1]{fontenc}
\usepackage[utf8]{inputenc}
\usepackage{amsmath,amssymb,amsthm,mathtools,mathrsfs}
\usepackage{libertinus}
\usepackage{microtype}
\usepackage{booktabs,tabularx,array}
\usepackage[
  a4paper,
  textwidth=150mm,
  textheight=235mm,
  heightrounded,
  centering
]{geometry}
\usepackage[dvipsnames]{xcolor}
\usepackage{titlesec}
\usepackage{etoolbox}
\usepackage{cite}
\usepackage[
  colorlinks=true,
  linkcolor=black,
  citecolor=black,
  urlcolor=black,
  pdftitle={Subexponential Bohnenblust--Hille Inequalities on Finite Cyclic Groups},
  pdfauthor={Daniel Marinho Pellegrino and Anselmo Raposo Junior},
  pdfkeywords={Bohnenblust--Hille inequality, finite cyclic groups, interaction order, subexponential constants, Potts hypercontractivity, Blei mixed norms, polarization, Bohr radius}
]{hyperref}

\allowdisplaybreaks
\numberwithin{equation}{section}
\titleformat{\section}[block]
  {\centering\normalfont\large\scshape}
  {\thesection.}{0.65em}{}
\titlespacing*{\section}{0pt}{3.2ex plus .8ex minus .2ex}{1.8ex plus .2ex}
\titleformat{\subsection}[block]
  {\normalfont\bfseries}
  {\thesubsection.}{0.55em}{}
\titlespacing*{\subsection}{0pt}{2.7ex plus .6ex minus .2ex}{1.1ex plus .2ex}
\titleformat{\subsubsection}[runin]
  {\normalfont\itshape}
  {\thesubsubsection.}{0.5em}{}[.]
\titlespacing*{\subsubsection}{0pt}{2.0ex plus .4ex minus .2ex}{0.7em}

\newtheoremstyle{cleanplain}
  {7pt}{7pt}{\itshape}{}{\bfseries}{.}{0.5em}{}
\newtheoremstyle{cleandefinition}
  {7pt}{7pt}{\normalfont}{}{\bfseries}{.}{0.5em}{}
\newtheoremstyle{cleanremark}
  {7pt}{7pt}{\normalfont}{}{\itshape}{.}{0.5em}{}

\usepackage{aliascnt}
\theoremstyle{cleanplain}
\newtheorem{maintheorem}{Theorem}

\newtheorem{theorem}{Theorem}[section]
\newaliascnt{proposition}{theorem}
\newtheorem{proposition}[proposition]{Proposition}
\aliascntresetthe{proposition}
\newaliascnt{lemma}{theorem}
\newtheorem{lemma}[lemma]{Lemma}
\aliascntresetthe{lemma}
\newaliascnt{corollary}{theorem}
\newtheorem{corollary}[corollary]{Corollary}
\aliascntresetthe{corollary}
\theoremstyle{cleandefinition}
\newaliascnt{definition}{theorem}

\aliascntresetthe{definition}
\theoremstyle{cleanremark}
\newaliascnt{remark}{theorem}
\newtheorem{remark}[remark]{Remark}
\aliascntresetthe{remark}

\newcommand{\T}{\mathbb T}
\newcommand{\C}{\mathbb C}
\newcommand{\N}{\mathbb N}
\newcommand{\E}{\mathbb E}
\newcommand{\supp}{\operatorname{supp}}
\newcommand{\Ind}{\textcolor{black}{\operatorname{Ind}}}
\newcommand{\Indup}{\textcolor{black}{\operatorname{Ind}_{\uparrow}}}
\newcommand{\BHdeg}[2]{\mathrm{BH}^{\mathrm{deg}}_{#1,#2}}
\newcommand{\BHint}[2]{\mathrm{BH}^{\mathrm{int}}_{#1,#2}}

\makeatletter
\newcommand{\@address}{}
\newcommand{\@email}{}
\newcommand{\@subjclass}{}
\newcommand{\@keywords}{}
\newcommand{\@status}{}
\newcommand{\address}[1]{\gdef\@address{#1}}
\newcommand{\email}[1]{\gdef\@email{#1}}
\newcommand{\subjclass}[2][]{\gdef\@subjclass{#2}}
\newcommand{\keywords}[1]{\gdef\@keywords{#1}}
\newcommand{\status}[1]{\gdef\@status{#1}}
\renewcommand{\maketitle}{%
  \begin{center}
    \vspace*{0.4em}
    {\LARGE\scshape \@title\par}
    \vspace{1.25em}
    {\normalsize\scshape \@author\par}
    \vspace{0.45em}
    {\small \@address\par}
    \ifdefempty{\@email}{}{\vspace{0.2em}{\small\texttt{\@email}\par}}
    \ifdefempty{\@status}{}{\vspace{0.55em}{\small\itshape\color{black}\@status\par}}
  \end{center}
  \vspace{1.0em}
}
\renewenvironment{abstract}
  {\begin{center}\begin{minipage}{0.91\textwidth}\small
   \noindent\textsc{Abstract.}\enspace}
  {\end{minipage}\end{center}\vspace{0.65em}}
\newcommand{\makefrontmatterdetails}{%
  \begin{center}
  \begin{minipage}{0.91\textwidth}
  \small\noindent\textsc{Keywords.}\enspace\@keywords\par
  \vspace{0.25em}
  \noindent\textsc{2020 Mathematics Subject Classification.}\enspace\@subjclass
  \end{minipage}
  \end{center}
  \vspace{0.75em}
}
\makeatother

\title{Subexponential Bohnenblust--Hille Inequalities\\on Finite Cyclic Groups}
\author{Daniel Marinho Pellegrino$^{1,*}$ and Anselmo Raposo Junior$^{2}$}
\address{$^{1}$Department of Mathematics, Universidade Federal da Para\'iba, Campus I, Cidade Universit\'aria, Jo\~ao Pessoa, PB 58051--900, Brazil\\
$^{2}$Department of Mathematics, Universidade Federal do Maranh\~ao, Cidade Universit\'aria Dom Delgado, Avenida dos Portugueses 1966, S\~ao Lu\'is, MA 65080--805, Brazil}
\email{$^{*}$Corresponding author: \href{mailto:dmpellegrino@gmail.com}{dmpellegrino@gmail.com}}
\status{}
\subjclass[2020]{Primary 42C10; Secondary 42A16, 43A46, 46B28}
\keywords{Bohnenblust--Hille inequality; finite cyclic groups; subexponential constants; interaction order; hypercontractivity; mixed norms; polarization; Bohr radius}

\begin{document}
\maketitle

\begin{abstract}
Let $C_q$ denote the group of the $q$th roots of unity.  Becker, Klein,
Slote, Volberg and Zhang asked whether the dimension-free
Bohnenblust--Hille constants for functions on $C_q^N$ grow
subexponentially with the degree.  We answer this question affirmatively.
In fact, we prove a stronger estimate for functions whose Fourier characters
involve at most $d$ coordinates.  If $\BHint{d}{q}$ is the optimal constant
for this larger class, then, for every fixed $q\geq2$,
\[
 \BHint{d}{q}\leq
 \exp\left(
 c_q\sqrt{d\log d}
 +O_q\left(\sqrt{\frac d{\log d}}\log\log d\right)
 \right),
\]
where $c_2=2$ and
$c_q=\sqrt{2q\log(q-1)/(q-2)}$ for $q\geq3$.
As an application, we obtain two-sided estimates for the Bohr radius of the
Fourier layer formed by characters involving exactly $d$ coordinates, and
we determine its asymptotic behaviour in natural joint regimes of $d$ and
$N$.
\end{abstract}
\makefrontmatterdetails
\clearpage

\begingroup
\setcounter{tocdepth}{2}
\small
\tableofcontents
\endgroup
\clearpage

\section{Introduction}\label{sec:introduction}

{\color{black}
The Bohnenblust--Hille inequality is one of the basic dimension-free
coefficient estimates in analysis.  In its classical polynomial form, the
number of variables is allowed to grow while the exponent of the coefficient
norm depends only on the degree.  Over the last two decades, the quantitative
question of how the best constant depends on that degree has become part of
the problem itself.  For complex polynomials on the polytorus, and later for
functions on the Boolean cube, the decisive phenomenon is that the optimal
constants can be taken to grow subexponentially; see, for instance,
\cite{BPSS,DMP}.  This improvement is not merely cosmetic: the size of the
Bohnenblust--Hille constant enters directly in applications where one must
keep estimates uniform in the ambient dimension.

Products of finite cyclic groups form a natural intermediate setting, but one
in which the usual arguments no longer pass unchanged.  Slote, Volberg and
Zhang established the dimension-free Bohnenblust--Hille inequality on these
groups \cite{SVZ}.  Becker, Klein, Slote, Volberg and Zhang subsequently gave
an explicit exponential bound by a dimension-free discretization argument
and asked whether the dependence on the degree could in fact be made
subexponential \cite{BKSVZ}.  The purpose of the present paper is to answer
that question, and to do so for the larger class in which complexity is
measured by the number of active coordinates rather than by total degree.
The definitions below are arranged to make this distinction precise before
we state the main result.
}

We begin with the classical setting and fix the notation that will be used
throughout the paper.  The unit circle is
\begin{equation}\label{eq:torus-intro}
 \T:=\{z\in\C:|z|=1\},
\end{equation}
and $\T^N$ is the $N$-dimensional torus, or polytorus.  If
$\alpha=(\alpha_1,\ldots,\alpha_N)$ is a multi-index with nonnegative
integer entries, we write
\[
 |\alpha|:=\alpha_1+\cdots+\alpha_N,
 \qquad
 z^\alpha:=z_1^{\alpha_1}\cdots z_N^{\alpha_N}.
\]
Thus a complex polynomial of degree at most $d$ in $N$ variables is a map
\[
 P:\C^N\longrightarrow\C
\]
of the form
\[
 P(z)=\sum_{|\alpha|\leq d}c_\alpha z^\alpha,
 \qquad z\in\C^N.
\]
We shall use its supremum norm on the torus,
\begin{equation}\label{eq:torus-supnorm-intro}
 \|P\|_{L_\infty(\T^N)}
 :=\sup_{z\in\T^N}|P(z)|.
\end{equation}

For $d\geq1$, put
\begin{equation}\label{eq:pd-intro}
 p_d:=\frac{2d}{d+1}.
\end{equation}
The polynomial Bohnenblust--Hille inequality \cite{BH} asserts that there is a
constant $B_d$, depending on $d$ but not on $N$, such that
\begin{equation}\label{eq:classical-BH-intro}
 \left(\sum_{|\alpha|\leq d}|c_\alpha|^{p_d}\right)^{1/p_d}
 \leq B_d\,\|P\|_{L_\infty(\T^N)}.
\end{equation}
We now pass to finite cyclic groups.  Fix an integer $q\geq2$, let
\begin{equation}\label{eq:Cq-intro}
 C_q:=\{1,\omega,\ldots,\omega^{q-1}\}\subset\T,
 \qquad \omega:=e^{2\pi i/q},
\end{equation}
and give $C_q$ the group law inherited from multiplication in $\C$.
The dimension-free Bohnenblust--Hille problem on the products $C_q^N$ was
studied by Slote, Volberg and Zhang in \cite{SVZ}.  {\color{black}Note that} $C_2=\{-1,1\}$, and $C_2^N$ is the Boolean cube.  Let
$\mu_q$ denote normalized counting measure on $C_q$; its product measure on
$C_q^N$ is $\mu_q^N$.  For a function $f:C_q^N\to\C$, we write
\begin{equation}\label{eq:cyclic-supnorm-intro}
 \|f\|_\infty:=\max_{x\in C_q^N}|f(x)|.
\end{equation}

Every $f:C_q^N\to\C$ has a unique Fourier expansion.  Write
$\mathbb Z_q:=\mathbb Z/q\mathbb Z$ and identify its elements with the
representatives $\{0,1,\ldots,q-1\}$.  We use these
representatives to index the characters.  Thus
\begin{equation}\label{eq:Fourier-intro}
 f(x)=\sum_{\alpha\in\{0,\ldots,q-1\}^N}
 \widehat f(\alpha)x^\alpha,
 \qquad
 x^\alpha=x_1^{\alpha_1}\cdots x_N^{\alpha_N}.
\end{equation}
The group operation in $\mathbb Z_q^N$ appears in the exponents.  More
precisely, if $\alpha,\beta\in\mathbb Z_q^N$, then
\begin{equation}\label{eq:mod-q-exponents-intro}
 x^\alpha x^\beta=x^{\alpha+\beta},
\end{equation}
where the addition $\alpha+\beta$ is taken coordinatewise modulo $q$.
The Fourier transform of $f$ is the map
\[
 \widehat f:\mathbb Z_q^N\longrightarrow\C,
\]
and the Fourier coefficient corresponding to $\alpha$ is
\begin{equation}\label{eq:Fourier-coeff-intro}
 \widehat f(\alpha)
 =\int_{C_q^N}f(x)\overline{x^\alpha}\,d\mu_q^N(x)
 =\frac1{q^N}\sum_{x\in C_q^N}f(x)\overline{x^\alpha}.
\end{equation}
For such a Fourier index, its \emph{total degree} is
\begin{equation}\label{eq:total-degree-intro}
 |\alpha|:=\alpha_1+\cdots+\alpha_N,
\end{equation}
where the fixed representatives $0,\ldots,q-1$ are used.  We denote by
$\BHdeg{d}{q}$ the least constant, independent of $N$, for which
\begin{equation}\label{eq:degree-BH-intro}
 \left(
 \sum_{|\alpha|\leq d}|\widehat f(\alpha)|^{p_d}
 \right)^{1/p_d}
 \leq \BHdeg{d}{q}\,\|f\|_\infty
\end{equation}
{\color{black}holds for every $f:C_q^N\to\C$ satisfying}
\begin{equation}\label{eq:degree-support-intro}
 \widehat f(\alpha)=0\qquad\text{whenever }|\alpha|>d.
\end{equation}
The problem is to understand the dependence of
$\BHdeg{d}{q}$ on $d$ when $q$ is fixed.

\subsection*{Why the finite cyclic case is different}

It is useful to isolate the obstruction before discussing the result.  The
Fourier expansion of $f$ determines the ordinary polynomial
\begin{equation}\label{eq:ordinary-extension-intro}
 \begin{aligned}
 Q_f:\C^N&\longrightarrow\C,\\
 Q_f(z)&:=
 \sum_{\alpha\in\{0,\ldots,q-1\}^N}
 \widehat f(\alpha)z^\alpha,
 \qquad z\in\C^N.
 \end{aligned}
\end{equation}
whose restriction to $C_q^N$ is $f$.  Let
$\operatorname{conv}(C_q)$ denote the convex hull, in the complex plane, of
the $q$ roots of unity.  As explained by Slote, Volberg and Zhang
\cite[Section~2]{SVZ}, a direct adaptation of the classical
Bohnenblust--Hille argument naturally produces an estimate of the form
\begin{equation}\label{eq:convex-hull-obstruction-intro}
 \left(
 \sum_{|\alpha|\leq d}|\widehat f(\alpha)|^{p_d}
 \right)^{1/p_d}
 \leq C(d,q)
 \sup_{z\in\operatorname{conv}(C_q)^N}|Q_f(z)|.
\end{equation}
The difficulty is that the norm on the right-hand side of
\eqref{eq:convex-hull-obstruction-intro} is not the norm in
\eqref{eq:degree-BH-intro}: for $q=2$ the extension is affine in each
coordinate and the supremum over $[-1,1]^N$ reduces to the Boolean cube,
whereas for $q\geq3$ no analogous reduction is available; see
\cite[Section~2]{SVZ}.

Slote, Volberg and Zhang proved dimension-free cyclic
Bohnenblust--Hille inequalities; for prime $q$ they obtained a bound of the
form $K_q^{d^2}$, where $K_q$ depends only on $q$
\cite[Theorem~1]{SVZ}.  Becker, Klein, Slote,
Volberg and Zhang later obtained
\begin{equation}\label{eq:BKSVZ-intro-bound}
 \BHdeg{d}{q}\leq(C\log q)^{2d}
\end{equation}
with a universal constant $C$ \cite[Corollary~7]{BKSVZ}, and asked whether,
for every fixed $3\leq q<\infty$,
\begin{equation}\label{eq:BKSVZ-question-intro}
 \BHdeg{d}{q}=\exp(o(d))
 \qquad(d\to\infty).
\end{equation}
{\color{black}Building on the dimension-free cyclic theory of Slote, Volberg and Zhang,
the quantitative question of Becker, Klein, Slote, Volberg and Zhang, and the
support-sensitive framework of Defant, Galicer, Mansilla, Masty\l o and Muro,
we answer this question affirmatively.  The result is proved in a slightly
larger class, which we now define.}

For a Fourier index $\alpha\in\{0,\ldots,q-1\}^N$, define the support map
\[
 \supp:\{0,\ldots,q-1\}^N\longrightarrow
 \mathcal P(\{1,\ldots,N\})
\]
by
\begin{equation}\label{eq:support-intro}
 \supp\alpha:=\{j\in\{1,\ldots,N\}:\alpha_j\neq0\}.
\end{equation}
The \emph{interaction-order map} is
\[
 s:\{0,\ldots,q-1\}^N\longrightarrow\{0,1,\ldots,N\},
\]
defined by
\begin{equation}\label{eq:interaction-order-intro}
 s(\alpha):=|\supp\alpha|.
\end{equation}
Let $\BHint{d}{q}$ be the least constant,
independent of $N$, such that
\begin{equation}\label{eq:interaction-BH-intro}
 \left(
 \sum_{s(\alpha)\leq d}|\widehat f(\alpha)|^{p_d}
 \right)^{1/p_d}
 \leq \BHint{d}{q}\,\|f\|_\infty
\end{equation}
for every $f:C_q^N\to\C$ satisfying
$\widehat f(\alpha)=0$ whenever $s(\alpha)>d$.  Since
$s(\alpha)\leq|\alpha|$, every function of total degree at most $d$ has
interaction order at most $d$.  Hence
\begin{equation}\label{eq:constant-comparison-intro}
 \BHdeg{d}{q}\leq\BHint{d}{q}.
\end{equation}
For $q\geq3$ this is genuinely a larger class.  For example,
$x_1^{q-1}\cdots x_d^{q-1}$ has interaction order $d$ but total degree
$(q-1)d$.

The interaction-order parameter has appeared in related forms.  For complex
homogeneous polynomials, Carando, Defant and Sevilla-Peris studied monomials
involving only a prescribed number of variables \cite{CDS}; see also
\cite{MNP,CNS}.  Defant, Galicer, Mansilla, Masty\l o and Muro recently
considered support-sensitive levels on Hamming schemes
\cite[Theorem~2.13]{DGMMM}.  {\color{black}In particular, their work already provides dimension-free control with exponential dependence on the interaction order for the class considered here.}  In the notation above, their result implies, for
fixed $q$,
\begin{equation}\label{eq:DGMMM-intro-bound}
 \BHint{d}{q}\leq D_1(q)^d.
\end{equation}
{\color{black}Thus exponential growth in the interaction order was already known.  The point of Theorem~\ref{thm:main} is to replace that exponential dependence, for every fixed $q$, by a subexponential one.}
{\color{black}All asymptotic statements in $d$ are for fixed $q$ unless explicitly stated otherwise.}
Set
\begin{equation}\label{eq:gamma-cq-intro}
 \gamma_2:=\frac12,
 \qquad
 \gamma_q:=\frac{q\log(q-1)}{4(q-2)}\quad(q\geq3),
 \qquad
 c_q:=2\sqrt{2\gamma_q}.
\end{equation}
{\color{black}We can now state our main result.}

\begin{maintheorem}[Subexponential interaction-order BH]\label{thm:main}
For every fixed $q\geq2$, as $d\to\infty$,
\begin{equation}\label{eq:main-theorem-intro}
 \BHint{d}{q}\leq
 \exp\left(
 c_q\sqrt{d\log d}
 +O_q\left(\sqrt{\frac d{\log d}}\log\log d\right)
 \right).
\end{equation}
\end{maintheorem}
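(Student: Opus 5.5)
We follow the strategy of Bayart--Pellegrino--Seoane and Defant--Masty\l o--P\'erez for the polytorus and the Boolean cube. Its three ingredients are a Blei-type mixed-norm inequality, polarization, and a hypercontractive estimate for blocks of variables. Each is adapted so that only the \emph{interaction order} enters, not the total degree.

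The first step is to decompose $f$ by supports. For $S\subset\{1,\ldots,N\}$ with $|S|=s\leq d$, let $f_S$ collect the characters with $\supp\alpha=S$. Each $f_S$ is a multi-affine object in the $s$ ``coordinates'' $x_j\in C_q$, $j\in S$. In each coordinate the relevant space is the $(q-1)$-dimensional space of mean-zero functions on $C_q$. This lets us pass from $f$ to a symmetric $d$-linear form $L$ indexed by coordinates. The form acts on vectors whose entries are mean-zero functions on $C_q$, and $L(x,\ldots,x)$ recovers the top layer; lower layers are handled by a standard homogeneity trick with an auxiliary coordinate, at a cost $e^{O(d/\log d)}$ or less. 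Because the slots are indexed by coordinates rather than by degree, the polarization constant is the Boolean-cube one: averaging over sign/root-of-unity choices of blocks yields $\|L\|\leq d^d/d!\cdot\|f\|_\infty$ only in the degenerate regime. We therefore never polarize fully, which leads to the second step.

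The second step applies Blei's inequality with a splitting parameter $m$ (to be chosen $\sim\sqrt{d/\log d}$). It gives
\[
 \Big(\sum_{s(\alpha)\leq d}|\widehat f(\alpha)|^{p_d}\Big)^{1/p_d}
 \leq \prod \text{(mixed norms)},
\]
where each mixed norm is $\ell_2$ in $d-m$ coordinate-slots and $\ell_{2m/(m+1)}$-type in $m$ slots. The $\ell_2$ part is controlled via Parseval by an $L_2$ norm of a function of the remaining $d-m$ coordinates. The $m$ fixed slots are controlled by the BH inequality of interaction order $m$, applied recursively (or simply by the known exponential bound \eqref{eq:DGMMM-intro-bound}, $D_1(q)^m$). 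Since $m\sim\sqrt{d/\log d}$, this contributes only $\exp(O(\sqrt{d/\log d}))$. Partial polarization in the $m$ slots costs $\binom{d}{m}$-type factors of size $\exp(O(m\log d))=\exp(O(\sqrt{d\log d}))$. This factor is one of the two terms to be balanced.

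The third step, the main obstacle, is the hypercontractive comparison of the $L_2$ norm with the $L_{p}$ norm of the restricted functions, which have interaction order $\leq d-m$. We must show $\|g\|_2\leq K^{s}\|g\|_{p}$ for $g$ of interaction order $s$ on $C_q^N$, with the sharp base coming from the Potts/$C_q$ log-Sobolev hypercontractivity constant. The optimal $(2,p)$ hypercontractivity radius for the uniform measure on $q$ points involves $\log(q-1)/(q-2)$ (Diaconis--Saloff-Coste); this produces exactly $\gamma_q$. Applying it with $p=2m/(m+1)$, that is $2-p\approx 2/m$, gives a factor $\exp(\gamma_q\, d\,(2-p)\cdot O(1))=\exp(O(\gamma_q d/m))$. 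The delicate point is that the noise operator acts per coordinate on the whole mean-zero space $L^2_0(C_q)$. Hence it contracts characters according to $s(\alpha)$ rather than $|\alpha|$, which is precisely why the interaction-order class is natural here. Care is needed to keep the constant exactly $\gamma_q$, including second-order terms in $2-p$, to obtain the $\log\log d$ error.

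Finally, we optimize. The exponent is $A\,m\log d+B\,d/m$, with $A$ the polarization constant and $B=\gamma_q\cdot(\text{factor})$. Minimizing at $m=\sqrt{Bd/(A\log d)}$ gives $2\sqrt{AB}\sqrt{d\log d}$, and bookkeeping should yield $c_q=2\sqrt{2\gamma_q}$. Rounding $m$ to an integer, the Stirling errors, and the lower-order recursive BH factors produce the $O_q(\sqrt{d/\log d}\,\log\log d)$ term.
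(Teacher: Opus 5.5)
Your outline has the same architecture as the paper: homogenization with an auxiliary coordinate, a Blei split at $m\sim\sqrt{2\gamma_q d/\log d}$, Potts hypercontractivity graded by interaction order on the $(d-m)$-block, and the level-$m$ inequality with a coarse exponential bound on the $m$-block, followed by optimization. However, the one step specific to $C_q$ with $q\geq3$ is not supplied. You rightly sense that, because slots are indexed by coordinates, a Boolean-cube-type polarization should be available, but you never say how. The block argument needs, for fixed $x,y\in C_q^N$, an explicit bound of the mixed values $L_f(\zeta(x)^m,\zeta(y)^{d-m})$ in terms of $\|f\|_\infty$. The sign/root-of-unity averaging and the $d^d/d!$ remark do not give this. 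Classical polarization formulas evaluate $P_f$ at complex combinations $\lambda\zeta(x)+\mu\zeta(y)$, where $P_f$ is not directly controlled by $\|f\|_\infty$; this is the convex-hull obstruction. The paper instead lets each coordinate of a random point $Z_t$ be $x_j$ with probability $t$ and $y_j$ otherwise, independently. Every monomial of $P_f$ uses at most one variable per coordinate block, and the auxiliary coordinate equals $1$ at both points, so
\[
 \E f(Z_t)=P_f\bigl(t\zeta(x)+(1-t)\zeta(y)\bigr)
 =\sum_{r=0}^d\binom dr t^r(1-t)^{d-r}L_f\bigl(\zeta(x)^r,\zeta(y)^{d-r}\bigr).
\]
This is a Bernstein expansion bounded by $\|f\|_\infty$ on $[0,1]$. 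The Chebyshev extremal bound for Bernstein coefficients then gives $\binom dm|L_f(\zeta(x)^m,\zeta(y)^{d-m})|\leq\binom{2d}{2m}\|f\|_\infty$.

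This exact constant is what determines $c_q$, which you assert (``bookkeeping should yield'') rather than derive. In your exponent $Am\log d+Bd/m$ you need $A=1$. The total combinatorial loss is the orbit-normalization factor $\binom dm$ times the polarization factor $\binom{2d}{2m}/\binom dm$, that is, $\binom{2d}{2m}=\exp\bigl((1+o(1))m\log d\bigr)$ at this $m$. The orbit factor accounts for passing from the canonical Fourier coefficients, to which Blei is applied, to the coefficients of the split mixed evaluation. A total loss of only $\binom dm$-type would give $A=1/2$, since $\log\binom dm\approx\frac12 m\log d$ here. With $B=2\gamma_q$, coming from $\gamma_q(d-m)\log\frac{m+1}{m-1}\leq 2\gamma_q d/(m-1)$, one gets $c_q=2\sqrt{2\gamma_q}$. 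The second-order terms in $2-p$ contribute only $O(d/m^2)=O(\log d)$, and the $\log\log d$ error comes from Stirling applied to $\binom{2d}{2m}$. Finally, your claimed cost $e^{O(d/\log d)}$ for the lower layers would swamp the theorem, since $d/\log d\gg\sqrt{d\log d}$. The homogenization must be free. In the paper it is, because $\zeta(x)_0=1$ and the convex mixing keeps the auxiliary coordinate equal to $1$, so $f$ is never split into layers.
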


\begin{corollary}[Total-degree consequence]\label{cor:total-degree}
For every fixed $q\geq2$, as $d\to\infty$,
\begin{equation}\label{eq:total-degree-consequence-intro}
 \BHdeg{d}{q}\leq
 \exp\left(
 c_q\sqrt{d\log d}
 +O_q\left(\sqrt{\frac d{\log d}}\log\log d\right)
 \right).
\end{equation}
In particular, $\BHdeg{d}{q}=\exp(o(d))$.  For every fixed $q\geq3$, this
{\color{black}answers affirmatively the subexponentiality question posed in \cite[Question~2]{BKSVZ}.}
\end{corollary}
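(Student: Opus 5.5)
The corollary follows directly from Theorem~\ref{thm:main} together with the comparison \eqref{eq:constant-comparison-intro}; no new analysis is needed. First, recall why that comparison holds. If $f:C_q^N\to\C$ satisfies the total-degree condition \eqref{eq:degree-support-intro}, then every $\alpha$ with $\widehat f(\alpha)\neq0$ has
\[
 s(\alpha)=|\supp\alpha|\leq\sum_{j\in\supp\alpha}\alpha_j=|\alpha|\leq d,
\]
since each nonzero representative satisfies $\alpha_j\geq1$. So $f$ lies in the class defining $\BHint{d}{q}$. Moreover, the index set $\{|\alpha|\leq d\}$ is contained in $\{s(\alpha)\leq d\}$, so the left-hand side of \eqref{eq:degree-BH-intro} is at most the left-hand side of \eqref{eq:interaction-BH-intro}. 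Both inequalities use the same exponent $p_d$ and the same sup norm, hence
\[
 \Bigl(\sum_{|\alpha|\leq d}|\widehat f(\alpha)|^{p_d}\Bigr)^{1/p_d}\leq\BHint{d}{q}\|f\|_\infty
\]
for all such $f$ and all $N$. Minimality of $\BHdeg{d}{q}$ then gives $\BHdeg{d}{q}\leq\BHint{d}{q}$.

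Next, insert the bound \eqref{eq:main-theorem-intro} of Theorem~\ref{thm:main}; this yields \eqref{eq:total-degree-consequence-intro} with the same implied constant in the $O_q$ term.

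For the statement $\BHdeg{d}{q}=\exp(o(d))$, observe that
\[
 \frac{c_q\sqrt{d\log d}}{d}=c_q\sqrt{\frac{\log d}{d}}\to0,
 \qquad
 \frac{1}{d}\sqrt{\frac{d}{\log d}}\,\log\log d=\frac{\log\log d}{\sqrt{d\log d}}\to0.
\]
So the exponent in \eqref{eq:total-degree-consequence-intro} is $o(d)$ for fixed $q$. This is exactly \eqref{eq:BKSVZ-question-intro}, and it answers the question of \cite{BKSVZ} for every fixed $q\geq3$.

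There is no real obstacle here. The only point needing care is to confirm that the comparison is not affected by the choice of representatives $\{0,\ldots,q-1\}$: the total degree is computed with these representatives, and each nonzero entry is then at least $1$, which is what makes $s(\alpha)\leq|\alpha|$ valid. All of the substantive work is in Theorem~\ref{thm:main}.
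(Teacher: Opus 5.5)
Your proposal is correct and follows the paper's route: you combine the comparison $\BHdeg{d}{q}\leq\BHint{d}{q}$ from \eqref{eq:constant-comparison-intro}, which rests on $s(\alpha)\leq|\alpha|$, with the bound of Theorem~\ref{thm:main}. The $\exp(o(d))$ claim then follows from the elementary check you give.
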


{\color{black}
The dependence of Bohnenblust--Hille constants on the degree is an important
quantitative issue, both in classical harmonic analysis and in more recent
applications to learning theory and quantum information; see, for instance,
\cite{DMP,SVZ,SloteDense}.  Subexponential growth is known in commutative
settings such as complex polynomials on the polytorus and the Boolean cube
\cite{BPSS,DMP}.  In contrast, a recent result shows that the noncommutative
Bohnenblust--Hille constants for qubit systems grow exponentially with the
degree \cite{SloteDense}.  Thus one way to view Theorem~\ref{thm:main} is that
it places the finite cyclic groups $C_q^N$, including the genuinely
non-Boolean cases $q\geq3$, on the subexponential side of this emerging
commutative/noncommutative contrast.
}

\subsection*{Proof strategy}

We now introduce the objects used in the proof and record the estimates that
will be invoked later.

\medskip
\noindent\emph{1. Homogenization.}
Let
\begin{equation}\label{eq:decorated-set-intro}
 D_{q,N}:=\{0\}\cup\bigl(\{1,\ldots,N\}\times\{1,\ldots,q-1\}\bigr).
\end{equation}
The symbol $0$ will be used only to fill empty positions.  If
$\alpha$ has interaction order $r$, then the character $x^\alpha$ is encoded
by the $d$ symbols
\[
 \underbrace{0,\ldots,0}_{d-r\ \mathrm{times}},
 \qquad
 (j,\alpha_j),\quad j\in\supp\alpha.
\]
Since $|D_{q,N}|=1+N(q-1)$, we write the coordinates of
$\C^{1+N(q-1)}$ as
\[
 z=\bigl(z_0,(z_{(j,u)})_{1\leq j\leq N,\ 1\leq u\leq q-1}\bigr).
\]
Define the $d$-homogeneous polynomial
\begin{equation}\label{eq:homogenization-intro}
 \begin{aligned}
 P_f:\C^{1+N(q-1)}&\longrightarrow\C,\\
 P_f(z)&:=
 \sum_{s(\alpha)\leq d}\widehat f(\alpha)
 z_0^{d-s(\alpha)}
 \prod_{j\in\supp\alpha}z_{(j,\alpha_j)}.
 \end{aligned}
\end{equation}
Every monomial in \eqref{eq:homogenization-intro} has exactly $d$ factors.
Define the map
\begin{equation}\label{eq:zeta-intro}
 \begin{aligned}
 \zeta:C_q^N&\longrightarrow\C^{1+N(q-1)},\\
 x&\longmapsto\zeta(x),
 \end{aligned}
 \qquad
 \zeta(x)_0:=1,
 \quad
 \zeta(x)_{(j,u)}:=x_j^u
 \quad(1\leq j\leq N,\ 1\leq u\leq q-1).
\end{equation}
Then
\begin{align}
 P_f(\zeta(x))
 &=\sum_{s(\alpha)\leq d}\widehat f(\alpha)
   1^{d-s(\alpha)}
   \prod_{j\in\supp\alpha}x_j^{\alpha_j}\notag\\
 &=\sum_{s(\alpha)\leq d}\widehat f(\alpha)x^\alpha
 =f(x).
 \label{eq:homogenization-recovery-intro}
\end{align}
Let
\begin{equation}\label{eq:Lf-domain-intro}
 L_f:\bigl(\C^{1+N(q-1)}\bigr)^d\longrightarrow\C
\end{equation}
be the symmetric $d$-linear form associated with $P_f$.  Thus
\begin{equation}\label{eq:Lf-intro}
 L_f(z,\ldots,z)=P_f(z),
 \qquad z\in\C^{1+N(q-1)}.
\end{equation}

\medskip
\noindent\emph{2. The block recurrence.}
The remaining ingredients are collected, with their precise statements, in
Section~\ref{sec:ingredients}.  Their roles are distinct.  Blei's mixed-norm
inequality separates the $d$ positions into an $m$-block and a
$(d-m)$-block.  Potts hypercontractivity controls the second block in
$L_2$ at a cost depending only on its interaction order.  The defining
Bohnenblust--Hille inequality at level $m$ is then applied to the first
block.  Finally, orbit counting keeps track of the passage from canonical
Fourier coefficients to the symmetric multilinear form, while mixed
polarization returns the resulting mixed value of $L_f$ to the original
discrete supremum norm.

Keeping these four losses separate until the last step gives the recurrence
\begin{equation}\label{eq:recurrence-preview}
 \BHint{d}{q}\leq
 \BHint{m}{q}
 \left(\frac{m+1}{m-1}\right)^{\gamma_q(d-m)}
 \binom{2d}{2m},
 \qquad 2\leq m\leq d/2.
\end{equation}
The factor containing $\gamma_q$ is the Potts loss, whereas the binomial
factor is the combined orbit-normalization and polarization loss.  Choosing
\begin{equation}\label{eq:optimal-m-intro}
 m\sim\sqrt{\frac{2\gamma_q d}{\log d}}
\end{equation}
balances these two contributions and yields Theorem~\ref{thm:main}.

Section~\ref{sec:notation} records the remaining Fourier notation.
Section~\ref{sec:ingredients} collects the four ingredients just described,
Section~\ref{sec:recurrence} proves \eqref{eq:recurrence-preview} and derives
the initial bound needed for the asymptotic step,
Section~\ref{sec:proof-main} completes the proof of Theorem~\ref{thm:main},
and Section~\ref{sec:bohr} gives the Bohr-radius application.  The complete
semigroup and log--Sobolev derivation of the Potts estimate is deferred to
Appendix~\ref{app:potts-details}.

\section{Fourier preliminaries}\label{sec:notation}

With the normalized counting measure $\mu_q^N$ from the introduction, the
characters $x\mapsto x^\alpha$ in \eqref{eq:Fourier-intro} form an
orthonormal basis of $L_2(C_q^N)$.  Hence Parseval's identity is
{\color{black}
\begin{equation}\label{eq:parseval-cyclic}
 \|f\|_2^2=\sum_{\alpha\in\mathbb Z_q^N} |\widehat f(\alpha)|^2.
\end{equation}}

For every integer $r\geq1$, set
\begin{equation}\label{eq:pr}
 p_r:=\frac{2r}{r+1}.
\end{equation}
This extends the notation $p_d$ from \eqref{eq:pd-intro} to arbitrary
interaction levels $r$.  For every integer $n\geq1$, we also write
\begin{equation}\label{eq:bracket-n}
 [n]:=\{1,\ldots,n\}.
\end{equation}

For $r\geq0$, define the exact interaction layer
\begin{equation}\label{eq:Vr-cyclic}
 \mathcal V_r(C_q^N)
 :=\operatorname{span}\{x^\alpha:s(\alpha)=r\},
 \qquad
 \mathcal V_{\leq d}(C_q^N)
 :=\bigoplus_{r=0}^d\mathcal V_r(C_q^N),
\end{equation}
with $\mathcal V_r(C_q^N)=\{0\}$ when $r>N$.  Here $\oplus$ denotes a
direct sum: every $g\in\mathcal V_{\leq d}(C_q^N)$ has a unique
decomposition
\[
 g=g_0+\cdots+g_d,
 \qquad g_r\in\mathcal V_r(C_q^N).
\]
The sum is direct because the Fourier supports corresponding to the distinct
conditions $s(\alpha)=r$ are disjoint.  Thus $\mathcal V_r(C_q^N)$ consists
exactly of the functions supported on the interaction layer $r$, while
$\mathcal V_{\leq d}(C_q^N)$ is the class appearing in
\eqref{eq:interaction-BH-intro}.

\section{Ingredients for the recurrence}\label{sec:ingredients}

\subsection{Potts hypercontractivity}\label{sec:potts}

{\color{black}For $0\leq\rho\leq1$ and $1\leq p\leq\infty$, define}
\begin{equation}\label{eq:Potts-operator}
 \begin{aligned}
 T_\rho:L^p(C_q,\mu_q)&\longrightarrow L^p(C_q,\mu_q),\\
 (T_\rho g)(a)&:=\rho g(a)
 +(1-\rho)\int_{C_q}g\,d\mu_q,
 \qquad a\in C_q.
 \end{aligned}
\end{equation}
{\color{black}
For functions on $C_q^N$ we apply the same operator separately in each
coordinate.  More precisely, for $1\leq j\leq N$ let
\[
 \begin{aligned}
 (T_{\rho,j}f)(x_1,\ldots,x_N)
 &: =\rho f(x_1,\ldots,x_N)\\
 &\quad +(1-\rho)\frac1q\sum_{a\in C_q}
 f(x_1,\ldots,x_{j-1},a,x_{j+1},\ldots,x_N),
 \end{aligned}
\]
and define
\begin{equation}\label{eq:Potts-product-domain}
 T_\rho^{\otimes N}:=T_{\rho,1}\cdots T_{\rho,N}.
\end{equation}
The coordinate operators commute, so the order is irrelevant.  Moreover, for
a character $x^\alpha=x_1^{\alpha_1}\cdots x_N^{\alpha_N}$,
\[
 T_{\rho,j}x^\alpha=
 \begin{cases}
 x^\alpha,&\alpha_j=0,\\
 \rho x^\alpha,&\alpha_j\neq0,
 \end{cases}
\]
because $q^{-1}\sum_{a\in C_q}a^u$ equals $1$ for $u=0$ and $0$ for
$1\leq u\leq q-1$.  Hence
\begin{equation}\label{eq:Potts-product-Fourier}
 T_\rho^{\otimes N}x^\alpha=\rho^{s(\alpha)}x^\alpha,
 \qquad
 T_\rho^{\otimes N}f
 =\sum_{\alpha\in\mathbb Z_q^N}
 \rho^{s(\alpha)}\widehat f(\alpha)x^\alpha.
\end{equation}
}
\begin{proposition}[\textcolor{black}{Potts hypercontractivity}]\label{prop:Potts-body}
\textcolor{black}{Let $1<p\leq2$, and recall that $\gamma_q$ is defined in \eqref{eq:gamma-cq-intro}. For every $N\geq1$ and every function}
$f:C_q^N\to\C$,
\begin{equation}\label{eq:Potts-HC}
 \|T_\rho^{\otimes N}f\|_2\leq\|f\|_p
 \qquad\text{whenever}\qquad
 0\leq\rho\leq(p-1)^{\gamma_q}.
\end{equation}
\end{proposition}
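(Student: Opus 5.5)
The plan is to go through the standard chain: log-Sobolev inequality, then Gross's theorem, then tensorization. The operator $T_\rho$ on $C_q$ is $e^{-tL}$ with $\rho=e^{-t}$, where $L=I-\E$ is the generator of the complete-graph (Potts) Markov chain on $q$ points with uniform stationary measure $\mu_q$. Indeed, $L$ is a projection, so $e^{-tL}=\E+e^{-t}(I-\E)=\rho I+(1-\rho)\E$, which is exactly \eqref{eq:Potts-operator}.

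\textbf{Step 1: the one-point log-Sobolev constant.} The Dirichlet form is
\[
\mathcal E(g,g)=\langle g,Lg\rangle=\operatorname{Var}_{\mu_q}(g).
\]
I would invoke the Diaconis--Saloff-Coste computation of the log-Sobolev constant for the complete graph $K_q$ with uniform measure. It gives
\[
\operatorname{Ent}_{\mu_q}(g^2)\leq \frac{1}{\alpha_q}\,\mathcal E(g,g),
\qquad \alpha_q=\frac{1-2/q}{\log(q-1)}=\frac{q-2}{q\log(q-1)}\quad(q\geq3),
\]
with $\alpha_2=1/2$. The normalization of $\mathcal E$ should be checked carefully against the paper's $\gamma_q$. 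The Gross-type requirement is $e^{-2\alpha t}\leq p-1$, that is, $\rho\leq(p-1)^{1/(2\alpha_q)}$.

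\textbf{Step 2: matching constants.} With the uniform transition rate convention, the constants give $1/(2\alpha_q)=q\log(q-1)/(2(q-2))$. This is twice the $\gamma_q$ in \eqref{eq:gamma-cq-intro}, so the normalization is the main point to get right. Diaconis--Saloff-Coste use the kernel $K(x,y)=1/(q-1)$ for $y\neq x$, whose Dirichlet form is $\tfrac{q}{q-1}\operatorname{Var}$, and they state the constant in the form $\mathcal E\geq\alpha\,\operatorname{Ent}$ without the factor $2$ convention. So I would rederive it directly: with $\mathcal E=\operatorname{Var}$, the optimal inequality $\operatorname{Ent}(g^2)\leq C\operatorname{Var}(g)$ has $C=\frac{q\log(q-1)}{q-2}\cdot\frac{q-1}{q}\cdot$(a convention factor). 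Then I would check that the hypercontractive threshold exponent $C/2$ (Gross: $\rho\leq(p-1)^{C/2}$ when $\operatorname{Ent}(g^2)\leq 2C'\mathcal E$) equals $\gamma_q$. For $q=2$ the result is Bonami's $\rho\leq\sqrt{p-1}$, which matches $\gamma_2=1/2$ and serves as a sanity check for the convention. I expect this bookkeeping, together with the complex-valued extension, to be the main obstacle. Complex $f$ is handled either by noting that the log-Sobolev inequality for $|g|$ suffices, since $\mathcal E(|g|,|g|)\leq\mathcal E(g,g)$ for this Markov generator, or by running Gross's differentiation argument on $\|e^{-tL}f\|_{p(t)}$ with $|f|$ directly. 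The latter works because the semigroup is positivity preserving, so $|T_\rho f|\leq T_\rho|f|$ pointwise, which reduces everything to nonnegative $f$.

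\textbf{Step 3: tensorization.} The log-Sobolev inequality tensorizes with the same constant, since entropy is subadditive and the Dirichlet form of $\sum_j L_j$ is the sum of the coordinate forms. Alternatively, one-dimensional hypercontractivity $\|T_\rho g\|_2\leq\|g\|_p$ can be iterated coordinate by coordinate using Minkowski's integral inequality, valid because $p\leq2$. The generator of $T_\rho^{\otimes N}$ is $\sum_j L_j$ with $e^{-t\sum L_j}=T_\rho^{\otimes N}$, consistent with \eqref{eq:Potts-product-Fourier}. This yields \eqref{eq:Potts-HC} for $\rho=(p-1)^{\gamma_q}$. Smaller $\rho$ follows from the semigroup property $T_{\rho\rho'}=T_\rho T_{\rho'}$ together with the $L_2$ contractivity of $T_{\rho'}$.
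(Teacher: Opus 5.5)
Your architecture matches the paper's appendix: the complete-graph log--Sobolev inequality, tensorization, Gross's theorem, the bound $|T_\rho f|\le T_\rho|f|$ for complex $f$, and $L_2$-contractivity for smaller $\rho$. But the one step with real content, the value of the exponent, is wrong as written and is never repaired.

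In Step~1 you take the Gross condition to be $e^{-2\alpha t}\le p-1$. Your convention is $\alpha\operatorname{Ent}(g^2)\le\mathcal E(g,g)$, with $T_\rho=e^{-tL}$ and $\rho=e^{-t}$. In that convention Gross's theorem gives $\|e^{-tL}\|_{p\to2}\le1$ whenever $e^{-4\alpha t}\le p-1$. Your own Bonami check exposes the slip: with $\alpha_2=\tfrac12$ your condition yields $\rho\le p-1$, not $\rho\le\sqrt{p-1}$.

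So the factor $2$ you detect in Step~2 is not a normalization problem in the log--Sobolev constant. The value $\alpha_q=\frac{q-2}{q\log(q-1)}$ that you quote for $\mathcal E=\operatorname{Var}$ is already correct, for two reasons:
\begin{itemize}
\item its limit at $q=2$ is $\tfrac12$, which is the sharp two-point inequality $\operatorname{Ent}(g^2)\le2\operatorname{Var}(g)$;
\item it equals $\frac{q-1}{q}\lambda_q$, where $\lambda_q=\frac{q-2}{(q-1)\log(q-1)}$ is the paper's constant for the generator $\frac{q}{q-1}(I-\Pi)$, whose Dirichlet form is $\frac q{q-1}\operatorname{Var}$.
\end{itemize}

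The repairs you sketch do not close the gap.
\begin{itemize}
\item Rescaling the Dirichlet form by $\frac q{q-1}$ cannot absorb a factor $2$ when $q\ge3$.
\item Your rederived constant $C=\frac{q\log(q-1)}{q-2}\cdot\frac{q-1}{q}\cdot(\cdots)$ would give $C=1$ at $q=2$, contradicting the sharp two-point constant $2$.
\item The parenthetical rule in Step~2, $\rho\le(p-1)^{C/2}$ when $\operatorname{Ent}(g^2)\le 2C\,\mathcal E$, is the correct form of Gross if $C'=C$. It contradicts your Step~1 and is never actually applied.
\end{itemize}

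As it stands, your computation only delivers $\|T_\rho^{\otimes N}f\|_2\le\|f\|_p$ for $\rho\le(p-1)^{1/(2\alpha_q)}=(p-1)^{2\gamma_q}$. Because $0<p-1<1$, this is strictly weaker than the statement. Fed into the recurrence, it would replace $c_q$ by $\sqrt2\,c_q$. Step~3 then asserts the range $\rho\le(p-1)^{\gamma_q}$ without any computation that supports it.

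The fix is one line. Take $L=I-\Pi$, $\rho=e^{-t}$, and $\operatorname{Ent}(g^2)\le\alpha_q^{-1}\operatorname{Var}(g)$. Gross gives the range $e^{-4\alpha_q t}\le p-1$, that is, $\rho\le(p-1)^{1/(4\alpha_q)}=(p-1)^{\gamma_q}$. This is exactly the paper's $\gamma_q=\frac{q}{4(q-1)\lambda_q}$ after its time variable is rescaled by $\frac q{q-1}$.
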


{\color{black}
\begin{corollary}[Low-interaction $L_p$--$L_2$ estimate]\label{cor:low-support-Lp-L2}
Let $1<p\leq2$, $k\geq0$, and let $f:C_q^N\to\C$ satisfy
\[
 \widehat f(\alpha)=0\qquad\text{whenever }s(\alpha)>k.
\]
Then
\begin{equation}\label{eq:low-support-Lp-L2}
 \|f\|_2\leq(p-1)^{-\gamma_q k}\|f\|_p.
\end{equation}
\end{corollary}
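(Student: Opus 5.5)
The plan is to apply Proposition~\ref{prop:Potts-body} not to $f$ itself but to a preimage of $f$ under the Potts semigroup, and then compare $L_p$ norms using the fact that only interaction levels $\le k$ are present. If $p=2$ (or $k=0$) there is nothing to prove, since $(p-1)^{-\gamma_q k}\ge 1$. So assume $1<p<2$ and $k\ge 1$, and set $\rho:=(p-1)^{\gamma_q}\in(0,1)$.

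\textbf{Step 1.} The diagonal action \eqref{eq:Potts-product-Fourier} shows that $T_\rho^{\otimes N}$ is invertible on $\mathcal V_{\le k}(C_q^N)$. Put
\[
 g:=\sum_{s(\alpha)\le k}\rho^{-s(\alpha)}\widehat f(\alpha)x^\alpha ,
\]
so that $T_\rho^{\otimes N}g=f$. Proposition~\ref{prop:Potts-body} then gives $\|f\|_2\le\|g\|_p$.

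\textbf{Step 2.} It remains to show $\|g\|_p\le\rho^{-k}\|f\|_p$. This is the main obstacle: $L_p$ norms are not monotone in the Fourier coefficients, so the estimate must come from structure rather than from Parseval.

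\emph{First attempt.} Write $g=\rho^{-k}\sum_{r=0}^k\rho^{k-r}f_r$, where $f_r$ is the layer-$r$ part of $f$. The required inequality $\bigl\|\sum_r\rho^{k-r}f_r\bigr\|_p\le\|f\|_p$ is a contraction statement for a multiplier acting on the levels. One could try to realize $\sum_r\rho^{k-r}f_r$ as an average of $f$ against a positive kernel. However, $T_\rho^{\otimes N}$ itself gives $\rho^{r}$, which is decreasing in $r$, whereas here the weights increase in $r$. So this attempt fails in general.

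\emph{Alternative.} Use the self-adjointness of $T_\rho^{\otimes N}$ and duality with the dual exponent $p'\ge2$. Hypercontractivity from $L_p$ to $L_2$ is equivalent to the bound $\|T_\rho^{\otimes N} h\|_{p'}\le\|h\|_2$. Now
\[
 \|f\|_2^2=\langle f,f\rangle=\langle T_\rho^{\otimes N}g,f\rangle=\langle g,T_\rho^{\otimes N}f\rangle .
\]
This still involves $g$, so instead expand $f$ into its layers directly:
\[
 \|f\|_2^2=\sum_{r\le k}\langle f_r,f\rangle=\sum_{r\le k}\rho^{-r}\langle T_\rho^{\otimes N}f_r,f\rangle .
\]
This is awkward. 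A cleaner route is
\[
 \|f\|_2^2=\langle f, f\rangle=\Bigl\langle f,\ T_\rho^{\otimes N}\Bigl(\sum_{r}\rho^{-r}f_r\Bigr)\Bigr\rangle=\bigl\langle T_\rho^{\otimes N} f,\ \tilde f\bigr\rangle,
\]
with $\tilde f:=\sum_r\rho^{-r}f_r$. Then Hölder's inequality together with $\|T_\rho^{\otimes N} f\|_2\le\|f\|_p$ gives $\|f\|_2^2\le\|f\|_p\,\|\tilde f\|_2$. Since $\rho\le1$, Parseval \eqref{eq:parseval-cyclic} gives $\|\tilde f\|_2\le\rho^{-k}\|f\|_2$. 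Dividing by $\|f\|_2$ yields
\[
 \|f\|_2\le\rho^{-k}\|f\|_p=(p-1)^{-\gamma_q k}\|f\|_p .
\]

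This duality argument avoids the problematic $L_p$ multiplier entirely, so I will present it as the proof. It uses only Proposition~\ref{prop:Potts-body}, the diagonal formula \eqref{eq:Potts-product-Fourier}, the fact that $T_\rho^{\otimes N}$ has real eigenvalues in an orthonormal basis (hence is self-adjoint), and Parseval. The only points to check are $\rho^{-s(\alpha)}\le\rho^{-k}$ for $s(\alpha)\le k$, and the trivial case $f=0$.
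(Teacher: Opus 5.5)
Your final duality argument is correct and is essentially the paper's proof: both apply Proposition~\ref{prop:Potts-body} to $f$ itself with $\rho=(p-1)^{\gamma_q}$, and both use Parseval to compare $\|f\|_2$ with $\|T_\rho^{\otimes N}f\|_2$. The paper simply reads off $\|T_\rho^{\otimes N}f\|_2^2=\sum_{s(\alpha)\le k}\rho^{2s(\alpha)}|\widehat f(\alpha)|^2\ge\rho^{2k}\|f\|_2^2$ directly, so your Cauchy--Schwarz detour through $\tilde f$, and the abandoned preimage $g$, are not needed.
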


\begin{proof}
Set $\rho=(p-1)^{\gamma_q}$.  By \eqref{eq:Potts-product-Fourier}, Parseval,
and the assumption $s(\alpha)\leq k$ on the Fourier support of $f$,
\begin{align*}
 \|T_\rho^{\otimes N}f\|_2^2
 &=\sum_{s(\alpha)\leq k}
   \rho^{2s(\alpha)}|\widehat f(\alpha)|^2\\
 &\geq \rho^{2k}
   \sum_{s(\alpha)\leq k}|\widehat f(\alpha)|^2
 =\rho^{2k}\|f\|_2^2.
\end{align*}
Proposition~\ref{prop:Potts-body} gives
$\|T_\rho^{\otimes N}f\|_2\leq\|f\|_p$.  Therefore
$\rho^k\|f\|_2\leq\|f\|_p$, which is \eqref{eq:low-support-Lp-L2}.
\end{proof}
}

{\color{black}For later use in the block recurrence, we isolate the particular form of Corollary~\ref{cor:low-support-Lp-L2} obtained with the Bohnenblust--Hille exponent $p_m$ and interaction order $d-m$.}

{\color{black}
\begin{corollary}[Potts factor for the recurrence]\label{cor:Potts-factor-body}
Let $2\leq m<d$. If $g:C_q^N\to\C$ has interaction order at most $d-m$, then
\begin{equation}\label{eq:Potts-factor-body}
 \|g\|_2
 \leq
 \left(\frac{m+1}{m-1}\right)^{\gamma_q(d-m)}
 \|g\|_{p_m}.
\end{equation}
\end{corollary}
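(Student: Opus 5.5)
This is a direct specialization of Corollary~\ref{cor:low-support-Lp-L2}. I will apply that corollary with $p=p_m$ and $k=d-m$, and then simplify the constant.

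First, check that $p=p_m$ is admissible. Since $m\geq2$, we have
\[
 p_m=\frac{2m}{m+1}\in\Bigl[\tfrac43,2\Bigr),
\]
so $1<p_m\leq2$. The hypothesis on $g$ says $\widehat g(\alpha)=0$ whenever $s(\alpha)>d-m$. Because $m<d$, we also have $k=d-m\geq1\geq0$. Hence Corollary~\ref{cor:low-support-Lp-L2} applies and gives
\[
 \|g\|_2\leq(p_m-1)^{-\gamma_q(d-m)}\|g\|_{p_m}.
\]

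It remains to compute the constant. We have
\[
 p_m-1=\frac{2m}{m+1}-1=\frac{m-1}{m+1},
\]
and this is positive since $m\geq2$. Therefore
\[
 (p_m-1)^{-\gamma_q(d-m)}=\left(\frac{m+1}{m-1}\right)^{\gamma_q(d-m)},
\]
which is exactly \eqref{eq:Potts-factor-body}.

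There is no genuine obstacle here. The only points to watch are that $m\geq2$ keeps $p_m$ strictly above $1$, so that the Potts estimate in Proposition~\ref{prop:Potts-body} is nontrivial, and that $\gamma_q>0$, so the exponents carry the correct sign. All the substantive work is in Proposition~\ref{prop:Potts-body}, which is taken as given.
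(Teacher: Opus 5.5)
Your proposal is correct and follows the paper's proof exactly: you apply Corollary~\ref{cor:low-support-Lp-L2} with $p=p_m$ and $k=d-m$, then use $p_m-1=\frac{m-1}{m+1}$. Your explicit check that $1<p_m\leq2$ for $m\geq2$ is a small, welcome addition that the paper leaves implicit.
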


\begin{proof}
Apply Corollary~\ref{cor:low-support-Lp-L2} with
\[
 p=p_m=\frac{2m}{m+1},\qquad k=d-m.
\]
Since
\[
 p_m-1
 =\frac{2m}{m+1}-1
 =\frac{m-1}{m+1},
\]
we obtain
\begin{align*}
 \|g\|_2
 &\leq (p_m-1)^{-\gamma_q(d-m)}\|g\|_{p_m}\\
 &=\left(\frac{m-1}{m+1}\right)^{-\gamma_q(d-m)}\|g\|_{p_m}\\
 &=\left(\frac{m+1}{m-1}\right)^{\gamma_q(d-m)}\|g\|_{p_m}.
\end{align*}
\end{proof}}

\subsection{Blei's mixed-norm inequality}\label{sec:blei}

{\color{black}
The block form of Blei's inequality stated below is the only form needed in
this paper.  It has constant one; see \cite[Theorem~2.1]{BPSS}, and see also
Blei's original inequality \cite{Blei}.  We state it as a lemma for later
reference.

\begin{lemma}[Blei's mixed-norm inequality]\label{lem:blei-block}
Let $I$ be a nonempty finite set and identify $I^d$ with the set of maps
$[d]\to I$.  For $A\subset[d]$, write
\[
 I^A:=\{\mathbf i_A:A\to I\}.
\]
If $S\subset[d]$, $|S|=m$, and
$\mathbf i_S\in I^S$, $\mathbf i_{S^c}\in I^{S^c}$, let
$\mathbf i_S\oplus\mathbf i_{S^c}\in I^{[d]}\cong I^d$ denote the unique
map whose restrictions to $S$ and $S^c$ are $\mathbf i_S$ and
$\mathbf i_{S^c}$, respectively.  For a scalar array
$b=(b_{\mathbf i})_{\mathbf i\in I^d}$, set
\[
 M_S(b):=
 \left[
 \sum_{\mathbf i_S\in I^S}
 \left(
 \sum_{\mathbf i_{S^c}\in I^{S^c}}
 |b_{\mathbf i_S\oplus\mathbf i_{S^c}}|^2
 \right)^{p_m/2}
 \right]^{1/p_m}.
\]
Then, for every $1\le m\le d$,
\begin{equation}\label{eq:blei-body}
 \left(\sum_{\mathbf i\in I^d}|b_{\mathbf i}|^{p_d}\right)^{1/p_d}
 \leq
 \left(
 \prod_{\substack{S\subset[d]\\ |S|=m}}M_S(b)
 \right)^{1/\binom dm}.
\end{equation}
\end{lemma}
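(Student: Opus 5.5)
Although the paper cites this block form from \cite[Theorem~2.1]{BPSS}, we sketch a self-contained argument. It has three parts: a pointwise interpolation identity, a multiple H\"older inequality, and Minkowski's inequality to move the $\ell_2$ norms outward.

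\emph{Pointwise factorization.} Write $\theta:=1/\binom dm$. Each index $j\in[d]$ lies in exactly $\binom{d-1}{m-1}$ of the sets $S$ with $|S|=m$, and $\binom{d-1}{m-1}/\binom dm=m/d$. We check the exponent identity
\[
 \frac1{p_d}=\frac{d+1}{2d}=\frac{m}{d}\cdot\frac{m+1}{2m}+\Bigl(1-\frac md\Bigr)\cdot\frac12,
\]
that is, $1/p_d$ interpolates between $1/p_m$ (weight $m/d$) and $1/2$ (weight $1-m/d$). Consequently, for every $\mathbf i$,
\[
 |b_{\mathbf i}|^{p_d}=\prod_{|S|=m}|b_{\mathbf i}|^{p_d\theta}.
\]

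\emph{Multiple H\"older.} Summing over $\mathbf i\in I^d$ and applying H\"older's inequality with $\binom dm$ equal exponents gives
\[
 \sum_{\mathbf i}|b_{\mathbf i}|^{p_d}\le\prod_{S}\Bigl(\sum_{\mathbf i}|b_{\mathbf i}|^{p_d}\Bigr)^{\theta}.
\]
This step alone is trivial and gains nothing; the real content must come from estimating each factor differently. The standard route is to prove the stronger \emph{mixed} statement: $\|b\|_{p_d}\le\prod_S M_S(b)^\theta$ follows from the complex interpolation (Riesz--Thorin/Calder\'on) identity for mixed-norm spaces,
\[
 \bigl[\ell_{p_m}^{S}(\ell_2^{S^c})\bigr]\text{ averaged over }S\ \supset\ \ell_{p_d}(I^d)\text{ contractively}.
\]
More elementarily, I would follow Blei/BPSS and argue by induction on $d$ using the following generalized H\"older lemma. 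If $1/p=\sum_k\theta_k/p_k$ coordinatewise in a mixed-exponent vector sense, then $\|b\|_{\mathbf p}\le\prod_k\|b\|_{\mathbf p_k}^{\theta_k}$ for mixed $\ell_{\mathbf p}$ norms with exponent vectors $\mathbf p=(p^{(1)},\dots,p^{(d)})$ applied in a fixed coordinate order. This is proved by iterating H\"older coordinate by coordinate.

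\emph{Reordering and Minkowski.} Each $M_S(b)$ is a mixed norm with exponent $p_m$ on the coordinates in $S$ and $2$ on $S^c$, but with $S$ taken as the outer coordinates. To place every $M_S$ in a common coordinate order, we use Minkowski's inequality. Moving an $\ell_2$ sum outward past an $\ell_{p_m}$ sum, with $p_m\le2$, only decreases the norm, so each $M_S(b)$ dominates the mixed norm in the natural order $1,\dots,d$ with exponent $p_m$ at positions in $S$ and $2$ elsewhere. At each coordinate $j$, the reciprocal exponents average to $\frac md\frac1{p_m}+(1-\frac md)\frac12=\frac1{p_d}$ by the identity above. The generalized H\"older lemma, applied with weights $\theta$ over all $S$, then bounds the $\ell_{p_d}$ norm by the product of these ordered mixed norms. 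Those are in turn bounded by the $M_S(b)$, which yields \eqref{eq:blei-body}.

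\emph{Main obstacle.} The delicate step is the generalized mixed H\"older lemma with a coordinate-dependent exponent vector. It requires an induction on the number of coordinates: apply ordinary H\"older in the innermost coordinate, with exponents chosen so the weights match, then pass to the outer sums. Along the way one has to check that the required inequalities between exponents, namely $p_m\le 2$, hold, since these justify each Minkowski step.
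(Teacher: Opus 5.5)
Your final argument is correct and is essentially the standard proof behind \cite[Theorem~2.1]{BPSS}, which is all the paper relies on here (it gives no proof of its own). That argument has three parts: the mixed-norm H\"older inequality in a fixed coordinate order, using $\frac md\frac1{p_m}+\frac{d-m}{d}\frac12=\frac1{p_d}$ at every coordinate, followed by Minkowski with $p_m\le2$ to show that each interleaved mixed norm is dominated by $M_S(b)$. The opening ``pointwise factorization'', the admittedly trivial H\"older step and the vague interpolation remark can be dropped, and the only induction actually needed is on the number of coordinates inside the mixed H\"older lemma, not on $d$.
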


The exponents are calibrated by the identity
\[
 \frac md\frac1{p_m}+\frac{d-m}{d}\frac12
 =\frac1{p_d}.
\]
}

\subsection{A polarization estimate}\label{sec:polarization}

{\color{black}We keep the set $D_{q,N}$, the map $\zeta$, the homogeneous
polynomial $P_f$, and the symmetric form $L_f$ from
\eqref{eq:decorated-set-intro}--\eqref{eq:Lf-intro}.  In particular,
$P_f(\zeta(x))=f(x)$ by \eqref{eq:homogenization-recovery-intro}.  When a
vector is repeated in an argument list, we use the shorthand $z^r$ for $r$
repetitions; thus $L_f(z^m,w^{d-m})$ denotes a mixed evaluation of $L_f$.}

\subsubsection{Bernstein coefficients}

{\color{black}The coefficient estimate below is due to Defant, Masty\l o
and P\'erez \cite[Proposition~3.2]{DMP}.  Since its precise constant is used
in the polarization argument, we include a self-contained proof in the
normalization needed here.}

\begin{lemma}[Bernstein coefficient estimate]
\label{lem:Bernstein-coefficient-estimate}
Let
\begin{equation}\label{eq:Bernstein-expansion}
 h(t)=\sum_{r=0}^d b_r\binom dr t^r(1-t)^{d-r},
 \qquad 0\leq t\leq1,
\end{equation}
be a complex polynomial of degree at most $d$.  Then, for
$0\leq m\leq d$,
\begin{equation}\label{eq:Bernstein-coefficient-bound}
 \binom dm |b_m|
 \leq
 \left(
 \sum_{j=0}^{\min\{m,d-m\}}
 4^j\binom d{2j}\binom{d-2j}{m-j}
 \right)
 \|h\|_{C[0,1]}.
\end{equation}
\end{lemma}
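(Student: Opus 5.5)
The plan is to identify $\binom dm b_m$ as a coefficient of the homogenization of $h$, and then recover that coefficient from Chebyshev coefficients of $h$, which are controlled by $\|h\|_{C[0,1]}$.

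\emph{Homogenize.} Put
\[
 H(u,v):=\sum_{r=0}^d b_r\binom dr u^r v^{d-r},
\]
so that $h(t)=H(t,1-t)$ and $\binom dm b_m$ is the coefficient of $u^mv^{d-m}$ in $H$. Since $\{t^r(1-t)^{d-r}\}$ is a basis of polynomials of degree at most $d$, $H$ is the unique $d$-homogeneous polynomial with $H(t,1-t)=h(t)$. Hence any other homogeneous representation of $h$ of degree $d$ in $(u,v)$ has the same coefficients.

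\emph{Pass to the circle.} Set $t=\cos^2(\varphi/2)$, so that $1-t=\sin^2(\varphi/2)$ and $x:=2t-1=\cos\varphi$. Then $h(t)=\sum_{k=0}^d a_kT_k(x)$ with Chebyshev polynomials $T_k$. Since $\varphi\mapsto h(\cos^2(\varphi/2))$ is a cosine polynomial of degree $d$ bounded by $\|h\|_{C[0,1]}$, the Fourier coefficient bound gives
\[
 |a_0|\leq\|h\|_{C[0,1]},\qquad |a_k|\leq 2\|h\|_{C[0,1]}\quad(k\geq1).
\]
Next, homogenize each $T_k$. Write $c=\cos(\varphi/2)$ and $s=\sin(\varphi/2)$, so $u=c^2$, $v=s^2$ and $c^2+s^2=1$. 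Then
\[
 T_k(x)=\cos(k\varphi)=\operatorname{Re}(c+is)^{2k}(c^2+s^2)^{d-k}.
\]
The real part keeps only the even powers of $s$, so
\[
 T_k=\sum_{j}(-1)^j\binom{2k}{2j}u^{k-j}v^{j}(u+v)^{d-k},
\]
which is a $d$-homogeneous polynomial in $(u,v)$. By the uniqueness noted above,
\[
 \binom dm b_m=\sum_{k=0}^d a_k\,[u^mv^{d-m}]\Bigl(\sum_j(-1)^j\binom{2k}{2j}u^{k-j}v^j(u+v)^{d-k}\Bigr).
\]

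\emph{Estimate.} Take absolute values, using $|a_k|\leq 2\|h\|_{C[0,1]}$ (with the factor $1$ for $k=0$). This bounds $\binom dm|b_m|$ by an explicit nonnegative double sum of binomial coefficients times $\|h\|_{C[0,1]}$. An alternative packaging of the same bound is to note that it equals the coefficient of $u^mv^{d-m}$ in an expression of the type $\bigl(u+v+2\sqrt{uv}\,\cdot(\text{sign-free})\bigr)^{d}$. Here the even powers $(2\sqrt{uv})^{2j}=4^ju^jv^j$ produce exactly the terms $4^j\binom d{2j}\binom{d-2j}{m-j}$, with $\binom d{2j}$ choosing the positions of the cross factors and $\binom{d-2j}{m-j}$ distributing the remaining $u$'s and $v$'s.

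\emph{Main obstacle.} I expect the hardest step to be this final combinatorial identification: showing that the triangle-inequality bound obtained from the Chebyshev route is at most, in fact equal to, $\sum_j 4^j\binom d{2j}\binom{d-2j}{m-j}$. The factor $2$ in $|a_k|\leq2\|h\|_{C[0,1]}$ has to be absorbed correctly into that sum. I would first check $d=1,2$ by hand to calibrate the normalization. If the direct Chebyshev bookkeeping becomes unwieldy, I would instead realize $\binom dm b_m$ as a single integral $\frac1{2\pi}\int_0^{2\pi}h(\cos^2(\varphi/2))K(\varphi)\,d\varphi$. The kernel $K$ would be read off from the expansion above, and the proof would finish by showing that $\frac1{2\pi}\int|K|$ is bounded by the coefficient sum, by expanding $K$ as a trigonometric polynomial and bounding its $\ell_1$ coefficient norm.
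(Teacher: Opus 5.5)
There is a genuine gap, and it sits in the step you flagged. The Chebyshev-coefficient route cannot produce the stated constant, so the ``final combinatorial identification'' is false, not merely hard. Write $c_{k,m}$ for the coefficient of $u^mv^{d-m}$ in your homogenized $T_k$. Your estimate is then $\binom dm|b_m|\le\bigl(|c_{0,m}|+2\sum_{k=1}^d|c_{k,m}|\bigr)\|h\|_{C[0,1]}$.

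\begin{itemize}
\item For $m=0$ one has $c_{k,0}=(-1)^k$, so you get $(2d+1)\|h\|_{C[0,1]}$. The claimed constant is $\binom{2d}{0}=1$, and indeed $b_0=h(0)$.
\item For $d=2$, $m=1$ you get $2+0+2\cdot 6=14$, against $\binom42=6$.
\end{itemize}

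The bounds $|a_k|\le 2\|h\|_{C[0,1]}$ are never attained simultaneously. The triangle inequality throws away exactly the cancellation the sharp constant relies on, so the factor $2$ cannot be absorbed.

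Your fallback has the same defect. The kernel you would read off is $K=c_{0,m}+2\sum_{k\ge1}c_{k,m}\cos k\varphi$. For $d=1$, $m=0$ this is $K=1-2\cos\varphi$, and $\frac1{2\pi}\int_0^{2\pi}|K|\,d\varphi=\frac13+\frac{2\sqrt3}{\pi}>1$. Its $\ell_1$ coefficient norm is again $2d+1$. An optimal representing measure for this coefficient functional is not a trigonometric polynomial of degree at most $d$. It is a discrete measure on the Chebyshev extremal nodes.

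The missing idea is the extremality of $T_d$ for coefficient functionals (V.~A.~Markov). The paper proves it by interpolation, as follows.

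\begin{enumerate}
\item Put $H(s)=h((1+s)/2)$ and let $\Lambda_m$ be the coefficient of $u^mv^{d-m}$. Lagrange interpolation at $\tau_j=\cos(j\pi/d)$, $0\le j\le d$, gives $\Lambda_m(H)=\sum_j H(\tau_j)\Lambda_m(\ell_j)$.
\item Write $s-\tau_k=u(1-\tau_k)-v(1+\tau_k)$ with $1\pm\tau_k\ge0$, and use $\operatorname{sgn}\prod_{k\ne j}(\tau_j-\tau_k)=(-1)^j$. This gives $\operatorname{sgn}\Lambda_m(\ell_j)=(-1)^{d-m+j}$.
\item Since $T_d(\tau_j)=(-1)^j$, it follows that $\sum_j|\Lambda_m(\ell_j)|=|\Lambda_m(T_d)|$. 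Hence $|\Lambda_m(H)|\le|\Lambda_m(T_d)|\,\|h\|_{C[0,1]}$.
\item Only now is your de Moivre computation used, and only for $k=d$. It gives $|\Lambda_m(T_d)|=\binom{2d}{2m}$.
\item Comparing coefficients of $x^{2m}$ in $(1+x)^{2d}=(1+2x+x^2)^d$ yields the stated sum. Your packaging $(u+v+2\sqrt{uv})^d=(\sqrt u+\sqrt v)^{2d}$ is exactly this identity.
\end{enumerate}

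Your homogenization and uniqueness step is correct and is needed. What must be replaced is the estimation step: bound the functional by its value on $T_d$ via the node sign pattern, not by summing bounds on individual Chebyshev coefficients.
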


{\color{black}
\begin{proof}
Put
\[
 u=\frac{1+s}{2},\qquad v=\frac{1-s}{2},
\]
and define
\[
 H(s):=h\!\left(\frac{1+s}{2}\right),\qquad -1\le s\le1.
\]
Then
\begin{equation}\label{eq:Bernstein-H-basis}
 H(s)=\sum_{r=0}^d a_r u^r v^{d-r},
 \qquad a_r:=\binom dr b_r,
\end{equation}
and
\[
 \|H\|_{C[-1,1]}=\|h\|_{C[0,1]}.
\]
Thus it is enough to estimate the coefficient $a_m$ in the basis
$\{u^r v^{d-r}\}_{r=0}^d$.

We first record the extremal property of the Chebyshev polynomial that is
needed for this coefficient.  Let
\[
 \tau_j:=\cos\frac{j\pi}{d},\qquad 0\le j\le d,
\]
and let $\ell_j$ be the Lagrange polynomial corresponding to these nodes.
If $\Lambda_m(Q)$ denotes the coefficient of $u^m v^{d-m}$ in a polynomial
$Q$ of degree at most $d$, then interpolation gives
\[
 \Lambda_m(Q)=\sum_{j=0}^d Q(\tau_j)\Lambda_m(\ell_j).
\]
We claim that
\begin{equation}\label{eq:Lambda-sign}
 \operatorname{sgn}\Lambda_m(\ell_j)
 =(-1)^{d-m+j}
\end{equation}
whenever $\Lambda_m(\ell_j)\ne0$.  Indeed,
\[
 s-\tau_k=u(1-\tau_k)-v(1+\tau_k),
\]
so the coefficient of $u^m v^{d-m}$ in
$\prod_{k\ne j}(s-\tau_k)$ has sign $(-1)^{d-m}$, since all remaining
factors occurring in that coefficient are nonnegative.  On the other hand,
$\tau_0>\tau_1>\cdots>\tau_d$, and therefore
\[
 \operatorname{sgn}\prod_{k\ne j}(\tau_j-\tau_k)=(-1)^j.
\]
This proves \eqref{eq:Lambda-sign}.

Now $T_d(\tau_j)=(-1)^j$.  Hence \eqref{eq:Lambda-sign} implies
\[
 |\Lambda_m(T_d)|
 =\sum_{j=0}^d |\Lambda_m(\ell_j)|.
\]
Consequently, for every polynomial $Q$ of degree at most $d$,
\begin{align}
 |\Lambda_m(Q)|
 &\le \|Q\|_{C[-1,1]}
       \sum_{j=0}^d |\Lambda_m(\ell_j)| \notag\\
 &=|\Lambda_m(T_d)|\,\|Q\|_{C[-1,1]}.
 \label{eq:Chebyshev-Bernstein-extremal}
\end{align}

It remains to compute $\Lambda_m(T_d)$.  If $u,v\ge0$ and $u+v=1$, write
$u=\cos^2(\theta/2)$ and $v=\sin^2(\theta/2)$.  Since $s=u-v=\cos\theta$,
De Moivre's formula gives
\begin{align*}
 T_d(s)
 &=\cos(d\theta)
 =\operatorname{Re}\bigl(\sqrt u+i\sqrt v\bigr)^{2d}\\
 &=\sum_{r=0}^d(-1)^{d-r}\binom{2d}{2r}u^r v^{d-r}.
\end{align*}
Both sides are polynomials in $s$, so the identity holds identically.  In
particular,
\begin{equation}\label{eq:Chebyshev-Bernstein-coeff}
 |\Lambda_m(T_d)|=\binom{2d}{2m}.
\end{equation}
Finally, taking the coefficient of $x^{2m}$ in
\[
 (1+x)^{2d}=(1+2x+x^2)^d
\]
yields
\begin{equation}\label{eq:binomial-Bernstein-sum}
 \binom{2d}{2m}
 =\sum_{j=0}^{\min\{m,d-m\}}
 4^j\binom d{2j}\binom{d-2j}{m-j}.
\end{equation}
Applying \eqref{eq:Chebyshev-Bernstein-extremal} to $Q=H$, and using
\eqref{eq:Bernstein-H-basis}, \eqref{eq:Chebyshev-Bernstein-coeff}, and
\eqref{eq:binomial-Bernstein-sum}, gives exactly
\eqref{eq:Bernstein-coefficient-bound}.
\end{proof}
}

\subsubsection{Random mixing and mixed polarization}

{\color{black}
The following argument adapts the Bernstein-coefficient polarization scheme to the present cyclic setting.  The point is to realize the mixed evaluation of $L_f$ as a Bernstein coefficient of a one-variable polynomial obtained by independently mixing the coordinates of two points of $C_q^N$.  We record the resulting estimate in the precise form needed below.
}

\begin{lemma}[Mixed polarization]\label{lem:qary-mixed-polarization}
For $0\leq m\leq d$ and $x,y\in C_q^N$,
\begin{equation}\label{eq:qary-mixed-polarization}
 |L_f(\zeta(x)^m,\zeta(y)^{d-m})|
 \leq A_{d,m}\|f\|_\infty,
\end{equation}
where
\begin{equation}\label{eq:Adm-sum-q}
 A_{d,m}
 :=\frac1{\binom dm}
 \sum_{r=0}^{\min\{m,d-m\}}
 4^r\binom d{2r}\binom{d-2r}{m-r}.
\end{equation}
\end{lemma}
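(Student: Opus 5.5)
Fix $x,y\in C_q^N$ and $0\le t\le1$. We realise the mixed values $L_f(\zeta(x)^r,\zeta(y)^{d-r})$ as Bernstein coefficients of a one-variable polynomial and then apply Lemma~\ref{lem:Bernstein-coefficient-estimate}. Throughout, $\zeta(x)$ and $\zeta(y)$ are the vectors from \eqref{eq:zeta-intro}.

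\emph{Random mixing.} Draw independent Bernoulli variables $\varepsilon_1,\ldots,\varepsilon_N$ with $\Pr(\varepsilon_j=1)=t$. Define the random point $w\in C_q^N$ by $w_j=x_j$ if $\varepsilon_j=1$ and $w_j=y_j$ otherwise, and set
\[
 h(t):=\E\, f(w).
\]
Since $h(t)$ is an average of values of $f$, we have $|h(t)|\le\|f\|_\infty$ for every $t$. Hence $\|h\|_{C[0,1]}\le\|f\|_\infty$.

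\emph{Expansion of $h$.} By \eqref{eq:homogenization-recovery-intro}, $f(w)=P_f(\zeta(w))$. Expanding monomial by monomial, each character $x^\alpha$ with $s(\alpha)=k$ contributes the factor
\[
 \prod_{j\in\supp\alpha}\bigl(t\,x_j^{\alpha_j}+(1-t)\,y_j^{\alpha_j}\bigr).
\]
The point is that in each coordinate only one power $\alpha_j$ appears, so the expectation factorises over the coordinates of $\supp\alpha$. Consequently $h(t)=P_f\bigl(t\zeta(x)+(1-t)\zeta(y)\bigr)$ on the nose, provided we check the $z_0$ coordinate. That coordinate equals $1=t\cdot1+(1-t)\cdot1$ for both points, so it is consistent.

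The potential obstacle is the following. $P_f$ evaluated at the convex combination $z=t\zeta(x)+(1-t)\zeta(y)$ is a polynomial in $z$, and each monomial is multilinear in distinct coordinates $(j,\alpha_j)$, with only $z_0$ repeated. So we must confirm that
\[
 z_0^{d-k}\prod_{j}z_{(j,\alpha_j)}
\]
matches the random-mixing expectation. It does: $z_0=1$ deterministically, and distinct $j$ are independent. No two factors share an index $j$, because $\supp\alpha$ records each $j$ once. This is precisely why the decorated set $D_{q,N}$ uses a single symbol per coordinate.

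\emph{Bernstein coefficients.} Expand by multilinearity and symmetry of $L_f$:
\[
 P_f\bigl(t\zeta(x)+(1-t)\zeta(y)\bigr)=\sum_{r=0}^d\binom dr t^r(1-t)^{d-r}\,L_f\bigl(\zeta(x)^r,\zeta(y)^{d-r}\bigr).
\]
Thus $h$ has the form \eqref{eq:Bernstein-expansion} with $b_r=L_f(\zeta(x)^r,\zeta(y)^{d-r})$.

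\emph{Conclusion.} Lemma~\ref{lem:Bernstein-coefficient-estimate} now gives
\[
 \binom dm|b_m|\le\Bigl(\sum_{r=0}^{\min\{m,d-m\}}4^r\binom d{2r}\binom{d-2r}{m-r}\Bigr)\|h\|_{C[0,1]}.
\]
Dividing by $\binom dm$ and using $\|h\|_{C[0,1]}\le\|f\|_\infty$ yields \eqref{eq:qary-mixed-polarization} with the constant $A_{d,m}$ of \eqref{eq:Adm-sum-q}. The only real work is the factorisation check in the expansion step; everything else is direct substitution.
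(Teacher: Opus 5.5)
Your proposal is correct and matches the paper's proof essentially step for step: coordinatewise random mixing of $x$ and $y$, then factorization of $\E Z_t^\alpha$ over $\supp\alpha$ to get $\E f(Z_t)=P_f(t\zeta(x)+(1-t)\zeta(y))$, then the multilinear Bernstein expansion, and finally Lemma~\ref{lem:Bernstein-coefficient-estimate}. One small wording point: $D_{q,N}$ has $q-1$ symbols per coordinate $j$, and the fact you actually use (and correctly verify) is that each monomial of $P_f$ contains at most one of them.
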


\begin{proof}
Fix $x,y\in C_q^N$ and $0\leq t\leq1$.  Let
\[
 \Omega_t:=\{0,1\}^N
\]
carry the product probability measure for which the coordinate variables
are independent and satisfy $\mathbb P(\varepsilon_j=1)=t$.  Define the
random variable
\[
 Z_t:\Omega_t\longrightarrow C_q^N
\]
coordinatewise by
\[
 (Z_t(\varepsilon))_j=
 \begin{cases}
 x_j,&\varepsilon_j=1,\\
 y_j,&\varepsilon_j=0.
 \end{cases}
\]
{\color{black}
For a Fourier index $\alpha=(\alpha_1,\ldots,\alpha_N)$ we have
\[
 Z_t^\alpha=\prod_{j=1}^N (Z_t)_j^{\alpha_j}.
\]
Since the random variables $(Z_t)_1,\ldots,(Z_t)_N$ depend on independent
coordinate variables $\varepsilon_1,\ldots,\varepsilon_N$, they are
independent.  Hence
\begin{align*}
 \E Z_t^\alpha
 &=\E\!\left[\prod_{j=1}^N (Z_t)_j^{\alpha_j}\right]\\
 &=\prod_{j=1}^N \E (Z_t)_j^{\alpha_j}.
\end{align*}
For each $j$, the definition of $Z_t$ gives
\[
 \E (Z_t)_j^{\alpha_j}
 =t x_j^{\alpha_j}+(1-t)y_j^{\alpha_j}.
\]
If $\alpha_j=0$, this factor is $t+(1-t)=1$.  Therefore
\begin{equation}\label{eq:expectation-character}
 \E Z_t^\alpha
 =\prod_{j=1}^N
 \bigl(t x_j^{\alpha_j}+(1-t)y_j^{\alpha_j}\bigr)
 =\prod_{j\in\supp\alpha}
 \bigl(t x_j^{\alpha_j}+(1-t)y_j^{\alpha_j}\bigr).
\end{equation}
Using the Fourier expansion of $f$ and linearity of expectation,
\begin{align*}
 \E f(Z_t)
 &=\sum_\alpha \widehat f(\alpha)\,\E Z_t^\alpha\\
 &=\sum_\alpha \widehat f(\alpha)
   \prod_{j\in\supp\alpha}
   \bigl(t x_j^{\alpha_j}+(1-t)y_j^{\alpha_j}\bigr).
\end{align*}
By the definition of the embedding $\zeta$ and of the homogeneous polynomial
$P_f$, the last expression is exactly
\begin{equation}\label{eq:expectation-Pf}
 \E f(Z_t)=P_f\bigl(t\zeta(x)+(1-t)\zeta(y)\bigr).
\end{equation}
}

Expand the right-hand side through the symmetric form:
\begin{align*}
 &P_f\bigl(t\zeta(x)+(1-t)\zeta(y)\bigr)\\
 &\quad=L_f\bigl(t\zeta(x)+(1-t)\zeta(y),\ldots,
                  t\zeta(x)+(1-t)\zeta(y)\bigr).
\end{align*}
After multilinear expansion, choose the $r$ positions in which the first
summand $t\zeta(x)$ occurs.  There are $\binom dr$ such choices; symmetry of
$L_f$ makes all corresponding values equal.  Therefore
\begin{equation}\label{eq:Bernstein-mixed-evaluation}
 \E f(Z_t)
 =\sum_{r=0}^d\binom drt^r(1-t)^{d-r}
 L_f(\zeta(x)^r,\zeta(y)^{d-r}).
\end{equation}
Since $Z_t$ takes values in $C_q^N$,
\[
 |\E f(Z_t)|\leq\E|f(Z_t)|\leq\|f\|_\infty.
\]
Thus \eqref{eq:Bernstein-mixed-evaluation} is a Bernstein expansion whose
uniform norm is at most $\|f\|_\infty$, and whose $m$-th coefficient is
$L_f(\zeta(x)^m,\zeta(y)^{d-m})$.  Applying
Lemma~\ref{lem:Bernstein-coefficient-estimate} and dividing by $\binom dm$
gives \eqref{eq:qary-mixed-polarization}.
\end{proof}

\subsubsection{The polarization constant}

\begin{lemma}[Exact polarization identity]\label{lem:qary-combinatorial-identity}
For $0\leq m\leq d$,
\begin{equation}\label{eq:qary-combinatorial-identity}
 \sum_{r=0}^{\min\{m,d-m\}}
 4^r\binom d{2r}\binom{d-2r}{m-r}
 =\binom{2d}{2m}.
\end{equation}
Consequently,
\begin{equation}\label{eq:Adm-exact-q}
 A_{d,m}=\frac{\binom{2d}{2m}}{\binom dm}.
\end{equation}
\end{lemma}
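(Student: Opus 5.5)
The identity \eqref{eq:qary-combinatorial-identity} will come from a generating-function computation, namely comparing coefficients in a factorization of $(1+x)^{2d}$; this is the same computation already sketched as \eqref{eq:binomial-Bernstein-sum} in the proof of Lemma~\ref{lem:Bernstein-coefficient-estimate}. The plan is to write it out carefully, keeping track of the range of the summation index. The consequence \eqref{eq:Adm-exact-q} will then follow at once by substituting the identity into the definition \eqref{eq:Adm-sum-q} of $A_{d,m}$.

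The first step uses $(1+x)^2=(1-x)^2+4x$ rather than $1+2x+x^2$ directly. Raising both sides to the $d$th power and expanding with the binomial theorem gives
\[
 (1+x)^{2d}=\sum_{j=0}^d\binom dj(4x)^j(1-x)^{2(d-j)}.
\]
This form does not give the target sum immediately. It is cleaner to substitute $x=y^2$ and use the even part.

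Concretely, $(1+y)^{2d}+(1-y)^{2d}=2\sum_m\binom{2d}{2m}y^{2m}$. On the other hand, put $a=1+y^2$ and $b=2y$, so that $(1\pm y)^2=a\pm b$. Then
\[
 (a+b)^d+(a-b)^d=2\sum_{r}\binom d{2r}a^{d-2r}b^{2r}
 =2\sum_r\binom d{2r}4^r y^{2r}(1+y^2)^{d-2r}.
\]
Expanding $(1+y^2)^{d-2r}$, the coefficient of $y^{2m}$ is $\binom{d-2r}{m-r}$, and this is nonzero only when $r\le m$ and $m-r\le d-2r$, that is, $r\le d-m$. Comparing the coefficients of $y^{2m}$ on both sides yields exactly \eqref{eq:qary-combinatorial-identity}, with the summation range $0\le r\le\min\{m,d-m\}$.

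The only point requiring care is this index bookkeeping: one must check that the range $0\le r\le\min\{m,d-m\}$ is exactly where $\binom d{2r}\binom{d-2r}{m-r}\neq0$, so that no terms are lost or added. The rest is routine. Dividing by $\binom dm$ in \eqref{eq:Adm-sum-q} then gives \eqref{eq:Adm-exact-q}.
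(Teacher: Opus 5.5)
Your proof is correct. The range check is complete, since $r\le\min\{m,d-m\}$ also gives $2r\le d$, so $\binom d{2r}$ never vanishes inside the range.

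The paper argues by double counting instead. It splits $2d$ objects into $d$ labelled pairs and classifies a $2m$-subset by the number $2r$ of pairs contributing exactly one object (evenness is forced by $\ell+2c=2m$) and the number $m-r$ of pairs contributing both. This yields $4^r\binom d{2r}\binom{d-2r}{m-r}$ directly, and the admissible range $0\le c\le d-2r$ is built into the count.

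Your computation is the algebraic form of the same decomposition. Each factor $(1+y)^2=a+b$ plays the role of a pair, $b^{2r}$ records the $2r$ singly-used pairs, and your even-part symmetrization $(a+b)^d+(a-b)^d$ does the job of the parity observation. The bijective proof avoids generating functions and makes the summation range self-evident. Yours is a mechanical coefficient check that matches the one-line derivation of \eqref{eq:binomial-Bernstein-sum}.

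Two simplifications are available. The opening expansion via $(1-x)^2+4x$ is never used and can be deleted. The symmetrization is also unnecessary: in $(a+b)^d=\sum_k\binom dk(2y)^k(1+y^2)^{d-k}$, only even $k$ can contribute to $y^{2m}$, because $(1+y^2)^{d-k}$ contains only even powers of $y$.
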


\begin{proof}
Partition $2d$ objects into $d$ labelled pairs and count subsets containing
exactly $2m$ objects.  The direct count is $\binom{2d}{2m}$.

For the second count, let $\ell$ be the number of pairs contributing exactly one
object and let $c$ be the number contributing both objects.  Then
\[
 \ell+2c=2m.
\]
Hence $\ell$ is even; write $\ell=2r$, so $c=m-r$.  Choose the $2r$ single pairs,
choose one of the two objects from each of them, and then choose the $m-r$
double pairs from the remaining $d-2r$ pairs.  This gives
\[
 \binom d{2r}2^{2r}\binom{d-2r}{m-r}
 =4^r\binom d{2r}\binom{d-2r}{m-r}.
\]
The constraints $c\geq0$ and $c\leq d-2r$ are equivalent to
$0\leq r\leq\min\{m,d-m\}$.  Summing over all admissible $r$ proves
\eqref{eq:qary-combinatorial-identity}.  Division by $\binom dm$ gives
\eqref{eq:Adm-exact-q}.
\end{proof}

{\color{black}
\subsection{Orbit notation and normalization}\label{sec:orbits}

The bookkeeping in this subsection serves only one purpose.  A Fourier
coefficient occurs once in the canonical Fourier expansion, whereas the
corresponding coefficient of the symmetric $d$-linear form is repeated over
all reorderings of its $d$ indices.  We record precisely the factor relating
these two descriptions.  The construction has three steps: first the indices
and their orbits, then the normalization of the symmetric coefficients, and
finally the two-block form that will be inserted into Blei's inequality.

\subsubsection{Indices and their orbits}

{Fix $f\in\mathcal V_{\leq d}(C_q^N)$ and let $D_{q,N}$ be as in}
\eqref{eq:decorated-set-intro}.  {Choose a total order on $D_{q,N}$,
with $0$ first.  For a finite $A\subset\N$, ordered increasingly, let}
\[
 \Ind(A):=\{\mathbf i:A\to D_{q,N}\}.
\]
{The symmetric group of the finite set $A$ acts on $\Ind(A)$ by
permuting positions.  We write $[\mathbf i]$ for the orbit of $\mathbf i$
under this action.  Let $\Indup(A)$ denote the set of nondecreasing indices.
Sorting the values of an index shows that every orbit contains exactly one
representative in $\Indup(A)$.}

A map is called \emph{block-affine} if its nonzero values
$(j,u)$ have pairwise distinct first coordinates $j$.  A block-affine
representative $\mathbf j\in\Indup([d])$ determines a unique Fourier
index $\alpha$: if $(k,u)$ occurs, set $\alpha_k=u$, and set all other
coordinates equal to zero.  Conversely, $\alpha$ determines the multiset
consisting of $d-s(\alpha)$ zeros and the pairs $(k,\alpha_k)$ with
$k\in\supp\alpha$.  Thus this correspondence is bijective.

Define the coefficient function
\[
 a:\Indup([d])\longrightarrow\C
\]
by
\begin{equation}\label{eq:canonical-a}
 a_{\mathbf j}:=
 \begin{cases}
  \widehat f(\alpha),&\mathbf j\text{ is block-affine and corresponds to }\alpha,\\
  0,&\text{otherwise}.
 \end{cases}
\end{equation}
The bijection above gives
\begin{equation}\label{eq:canonical-norm}
 \|\widehat f\|_{\ell_{p_d}}
 =\left(\sum_{\mathbf j\in\Indup([d])}
 |a_{\mathbf j}|^{p_d}\right)^{1/p_d}.
\end{equation}

We use two extensions to all ordered indices.  The orbit-constant extension is
the map
\[
 \widetilde a:\Ind([d])\longrightarrow\C
\]
defined by
\begin{equation}\label{eq:atilde-def}
 \widetilde a_{\mathbf i}:=a_{\mathbf j}
 \quad\text{when }\mathbf j\in\Indup([d])\cap[\mathbf i],
\end{equation}
and the canonical-support extension is the map
\[
 b:\Ind([d])\longrightarrow\C
\]
defined by
\begin{equation}\label{eq:b-def}
 b_{\mathbf i}:=
 \begin{cases}
  a_{\mathbf i},&\mathbf i\in\Indup([d]),\\
  0,&\mathbf i\notin\Indup([d]).
 \end{cases}
\end{equation}
Thus $b$ contains each Fourier coefficient once, whereas
$\widetilde a$ repeats it on the whole orbit.

\subsubsection{Coefficient normalization}

If a map on a set of size $r$ takes distinct values
$\xi_1,\ldots,\xi_s$ with multiplicities
$n_1,\ldots,n_s$, then the orbit--stabilizer formula gives
\begin{equation}\label{eq:general-orbit-size}
 |[\mathbf i]|=\frac{r!}{n_1!\cdots n_s!}.
\end{equation}
Indeed, there are $r!$ permutations of the positions, and the
$n_1!\cdots n_s!$ permutations among equal entries do not change the map.

\begin{lemma}[Orbit normalization]\label{lem:symmetric-coeff}
For every ordered index $\mathbf i\in\Ind([d])$, the coefficient of
$L_f$ at $\mathbf i$ is
\begin{equation}\label{eq:symmetric-form-coeff}
 \frac{\widetilde a_{\mathbf i}}{|[\mathbf i]|}.
\end{equation}
\end{lemma}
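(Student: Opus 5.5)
The plan is to compare two expansions of the homogeneous polynomial $P_f$ and to use that a symmetric $d$-linear form is uniquely determined by $P_f$. Write the coefficients of $L_f$ as $c_{\mathbf i}$, $\mathbf i\in\Ind([d])$, so that
\[
 L_f(z^{(1)},\ldots,z^{(d)})=\sum_{\mathbf i\in\Ind([d])}c_{\mathbf i}\,z^{(1)}_{\mathbf i(1)}\cdots z^{(d)}_{\mathbf i(d)}.
\]
Symmetry of $L_f$ is equivalent to $c_{\mathbf i}$ being constant on each orbit $[\mathbf i]$. Conversely, any orbit-constant array defines a symmetric $d$-linear form. The polarization formula shows that a symmetric form is determined by its diagonal. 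It therefore suffices to exhibit the orbit-constant array $c_{\mathbf i}:=\widetilde a_{\mathbf i}/|[\mathbf i]|$ and to check that its diagonal equals $P_f$.

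For the diagonal computation, set $z^{(1)}=\cdots=z^{(d)}=z$ and group the sum by orbits. All $\mathbf i$ in one orbit produce the same monomial $\prod_{k}z_{\mathbf i(k)}$, namely the monomial of the unique nondecreasing representative $\mathbf j\in\Indup([d])\cap[\mathbf i]$. Distinct orbits give distinct monomials, since the orbit is recovered from the multiset of values. The diagonal is therefore
\[
 \sum_{\mathbf j\in\Indup([d])}|[\mathbf j]|\cdot\frac{a_{\mathbf j}}{|[\mathbf j]|}\prod_{k=1}^d z_{\mathbf j(k)}
 =\sum_{\mathbf j\in\Indup([d])}a_{\mathbf j}\prod_{k=1}^d z_{\mathbf j(k)}.
\]
By \eqref{eq:canonical-a}, only block-affine $\mathbf j$ contribute. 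Under the bijection of the previous subsection, such a $\mathbf j$ corresponds to $\alpha$ with $s(\alpha)\le d$, and its monomial is $z_0^{d-s(\alpha)}\prod_{j\in\supp\alpha}z_{(j,\alpha_j)}$. This recovers \eqref{eq:homogenization-intro} term by term, so the diagonal is $P_f$.

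The only points needing care are the uniqueness claim and the fact that non-block-affine monomials carry zero coefficient. The latter holds because $a$ vanishes on them, consistent with $P_f$ having no such monomials. For uniqueness, I would cite the standard polarization formula expressing a symmetric $d$-linear form through its diagonal. Alternatively, I would note directly that the monomials $\prod_k z_{\mathbf j(k)}$, $\mathbf j\in\Indup([d])$, are linearly independent. Then the orbit sums $\sum_{\mathbf i\in[\mathbf j]}c_{\mathbf i}=|[\mathbf j]|\,c_{\mathbf j}$ of any symmetric form with diagonal $P_f$ are forced to equal $a_{\mathbf j}$, which gives $c_{\mathbf i}=\widetilde a_{\mathbf i}/|[\mathbf i]|$. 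I expect no real obstacle; the argument is pure bookkeeping, and the orbit-size formula \eqref{eq:general-orbit-size} is not even needed explicitly.
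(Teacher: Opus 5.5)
Your proposal is correct and is essentially the paper's argument. The paper takes exactly your ``alternative'' route: it writes $L_f$ in coordinates, notes that the $|[\mathbf i]|$ members of an orbit produce one commutative monomial whose coefficient in $P_f$ is $\widetilde a_{\mathbf i}$, and reads off $|[\mathbf i]|\,c_{\mathbf i}=\widetilde a_{\mathbf i}$. Your main existence-plus-uniqueness packaging via polarization is also valid but unnecessary, since linear independence of the monomials already forces the coefficients directly.
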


\begin{proof}
Write the symmetric form in coordinates as
\[
 L_f(z^{(1)},\ldots,z^{(d)})
 =\sum_{\mathbf i\in\Ind([d])}
 c_{\mathbf i}\prod_{r=1}^d z^{(r)}_{\mathbf i(r)}.
\]
Symmetry implies $c_{\mathbf i}=c_{\mathbf i'}$ whenever
$\mathbf i'$ belongs to the orbit of $\mathbf i$.  On the diagonal,
\[
 P_f(z)=L_f(z,\ldots,z)
 =\sum_{\mathbf i}c_{\mathbf i}
   \prod_{r=1}^d z_{\mathbf i(r)}.
\]
All $|[\mathbf i]|$ members of one orbit produce the same commutative
monomial.  Its coefficient in $P_f$ is $\widetilde a_{\mathbf i}$.  Hence
\[
 |[\mathbf i]|c_{\mathbf i}=\widetilde a_{\mathbf i},
\]
which is \eqref{eq:symmetric-form-coeff}.
\end{proof}

\subsubsection{The two-block form used later}

Fix $S\subset[d]$ with $|S|=m$.  If
$\mathbf i_S\in\Ind(S)$ and
$\mathbf j_{S^c}\in\Ind(S^c)$, let
$\mathbf i_S\oplus\mathbf j_{S^c}$ be the map on $[d]$ equal to $\mathbf i_S$ on
$S$ and to $\mathbf j_{S^c}$ on $S^c$.  For fixed block indices $\mathbf i_S\in\Ind(S)$ and
$\mathbf j_{S^c}\in\Ind(S^c)$, define the functions
\[
 C_q^N\longrightarrow\C,
 \qquad
 x\longmapsto x^{\mathbf i_S},
 \qquad
 y\longmapsto y^{\mathbf j_{S^c}},
\]
by
\begin{equation}\label{eq:xu-yv}
 x^{\mathbf i_S}:=\prod_{r\in S}\zeta(x)_{\mathbf i_S(r)},
 \qquad
 y^{\mathbf j_{S^c}}:=\prod_{r\in S^c}\zeta(y)_{\mathbf j_{S^c}(r)}.
\end{equation}
Zero entries contribute the factor $\zeta(x)_0=\zeta(y)_0=1$.

Assume that $\widetilde a_{\mathbf i_S\oplus\mathbf j_{S^c}}\neq0$.  Then the global
index is block-affine.  {Hence every nonzero entry is distinct} and
the only repeated value is $0$.  Let $z_{\mathbf i_S}$ and
$z_{\mathbf j_{S^c}}$ be the numbers of zeros in the two blocks.  Formula
\eqref{eq:general-orbit-size} gives
\begin{align}
 |[\mathbf i_S]|&=\frac{m!}{z_{\mathbf i_S}!},\label{eq:orbit-u}\\
 |[\mathbf j_{S^c}]|&=\frac{(d-m)!}{z_{\mathbf j_{S^c}}!},\label{eq:orbit-v}\\
 |[\mathbf i_S\oplus\mathbf j_{S^c}]|
 &=\frac{d!}{(z_{\mathbf i_S}+z_{\mathbf j_{S^c}})!}.
 \label{eq:orbit-uv}
\end{align}
Therefore
\begin{align}
 \frac{|[\mathbf i_S\oplus\mathbf j_{S^c}]|}
 {|[\mathbf i_S]|\,|[\mathbf j_{S^c}]|}
 &=\frac{d!}{(z_{\mathbf i_S}+z_{\mathbf j_{S^c}})!}
   \frac{z_{\mathbf i_S}!}{m!}
   \frac{z_{\mathbf j_{S^c}}!}{(d-m)!}\\
 &=\binom dm
   \frac{z_{\mathbf i_S}!z_{\mathbf j_{S^c}}!}
        {(z_{\mathbf i_S}+z_{\mathbf j_{S^c}})!}\\
 &=\frac{\binom dm}
        {\binom{z_{\mathbf i_S}+z_{\mathbf j_{S^c}}}{z_{\mathbf i_S}}}\\
 &\leq\binom dm.
 \label{eq:orbit-ratio}
\end{align}
When the global coefficient is zero there is nothing to estimate.  Hence we
only use \eqref{eq:orbit-ratio} in the nonzero case.

Because $L_f$ is symmetric, its mixed value depends only on the number of
$x$-arguments.  In the identity below we insert $\zeta(x)$ in the positions
of $S$ and $\zeta(y)$ in the positions of $S^c$.

Define the coefficient function
\[
 C^S:\Indup(S)\times\Indup(S^c)\longrightarrow\C
\]
by
\begin{equation}\label{eq:CuvS}
 C^S_{\mathbf i_S,\mathbf j_{S^c}}
 :=\frac{|[\mathbf i_S]|\,|[\mathbf j_{S^c}]|}
 {|[\mathbf i_S\oplus\mathbf j_{S^c}]|}
 \widetilde a_{\mathbf i_S\oplus\mathbf j_{S^c}}.
\end{equation}
Solving this identity for $\widetilde a$ and using
\eqref{eq:orbit-ratio} gives
\begin{equation}\label{eq:atilde-C-bound}
 |\widetilde a_{\mathbf i_S\oplus\mathbf j_{S^c}}|
 \leq\binom dm|C^S_{\mathbf i_S,\mathbf j_{S^c}}|.
\end{equation}

\begin{lemma}[Mixed evaluation identity]\label{lem:mixed-evaluation}
Fix $S\subset[d]$ with $|S|=m$.  For every $x,y\in C_q^N$,
\begin{equation}\label{eq:mixed-evaluation}
 L_f(\zeta(x)^m,\zeta(y)^{d-m})
 =\sum_{\mathbf i_S\in\Indup(S)}
  \sum_{\mathbf j_{S^c}\in\Indup(S^c)}
 C^S_{\mathbf i_S,\mathbf j_{S^c}}x^{\mathbf i_S}y^{\mathbf j_{S^c}}.
\end{equation}
\end{lemma}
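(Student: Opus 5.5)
The identity follows by writing the mixed value of $L_f$ in coordinates and regrouping the resulting sum by orbits, one block at a time. By Lemma~\ref{lem:symmetric-coeff},
\[
 L_f(z^{(1)},\ldots,z^{(d)})=\sum_{\mathbf i\in\Ind([d])}\frac{\widetilde a_{\mathbf i}}{|[\mathbf i]|}\prod_{r=1}^d z^{(r)}_{\mathbf i(r)}.
\]
Because $L_f$ is symmetric, we may place $\zeta(x)$ in the positions of $S$ and $\zeta(y)$ in the positions of $S^c$. Writing each $\mathbf i\in\Ind([d])$ uniquely as $\mathbf k_S\oplus\mathbf l_{S^c}$ with $\mathbf k_S\in\Ind(S)$ and $\mathbf l_{S^c}\in\Ind(S^c)$, the product factors as $x^{\mathbf k_S}y^{\mathbf l_{S^c}}$ in the notation of \eqref{eq:xu-yv}. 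This gives
\[
 L_f(\zeta(x)^m,\zeta(y)^{d-m})=\sum_{\mathbf k_S\in\Ind(S)}\sum_{\mathbf l_{S^c}\in\Ind(S^c)}\frac{\widetilde a_{\mathbf k_S\oplus\mathbf l_{S^c}}}{|[\mathbf k_S\oplus\mathbf l_{S^c}]|}\,x^{\mathbf k_S}y^{\mathbf l_{S^c}}.
\]

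Next we group the terms according to the orbit of $\mathbf k_S$ under permutations of $S$ and the orbit of $\mathbf l_{S^c}$ under permutations of $S^c$. Each such pair of orbits has a unique representative $(\mathbf i_S,\mathbf j_{S^c})\in\Indup(S)\times\Indup(S^c)$. Three quantities are constant on the pair of orbits:
\begin{itemize}
\item $x^{\mathbf k_S}$ depends only on the multiset of values of $\mathbf k_S$, since the factors commute, and likewise $y^{\mathbf l_{S^c}}$;
\item the multiset of values of the combined index $\mathbf k_S\oplus\mathbf l_{S^c}$ is fixed, so its full orbit in $\Ind([d])$ is the same as that of $\mathbf i_S\oplus\mathbf j_{S^c}$;
\item hence both $\widetilde a$ (which is orbit-constant) and $|[\cdot]|$ take the same value at $\mathbf k_S\oplus\mathbf l_{S^c}$ as at $\mathbf i_S\oplus\mathbf j_{S^c}$.
\end{itemize}
The number of pairs in the grouped class is $|[\mathbf i_S]|\,|[\mathbf j_{S^c}]|$. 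Collapsing the class therefore produces the coefficient
\[
 \frac{|[\mathbf i_S]|\,|[\mathbf j_{S^c}]|}{|[\mathbf i_S\oplus\mathbf j_{S^c}]|}\,\widetilde a_{\mathbf i_S\oplus\mathbf j_{S^c}},
\]
which is exactly $C^S_{\mathbf i_S,\mathbf j_{S^c}}$ as defined in \eqref{eq:CuvS}. This is \eqref{eq:mixed-evaluation}.

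The step needing the most care is the observation that permuting within $S$ and within $S^c$ separately keeps the global index in the same $S_d$-orbit. This holds because any such permutation is itself a permutation of $[d]$, so both $\widetilde a$ and the full orbit size are unchanged. The converse, that the block orbits are genuinely finer than the global orbit, does not need to be checked. Every term of the original sum lies in exactly one pair of block orbits, so the regrouping is a partition of the summation set, and each term is counted exactly once.
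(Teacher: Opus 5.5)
Your proof is correct and follows essentially the same route as the paper: you expand $L_f$ in ordered coordinates using the orbit normalization of Lemma~\ref{lem:symmetric-coeff}, then collapse the $|[\mathbf i_S]|\,|[\mathbf j_{S^c}]|$ ordered indices that share a pair of block orbits into the coefficient $C^S_{\mathbf i_S,\mathbf j_{S^c}}$. The only cosmetic difference is in the grouping: the paper first fixes a global orbit with nonzero coefficient and splits it by block multisets, whereas you partition the whole index set directly by pairs of block orbits, which handles the zero coefficients automatically.
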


\begin{proof}
Start from the ordered-coordinate expansion of $L_f$.  Fix one global orbit
with nonzero canonical coefficient.  For each pair of block multisets from
this orbit, let $\mathbf i_S\in\Indup(S)$ and
$\mathbf j_{S^c}\in\Indup(S^c)$ be their canonical representatives.  There
are exactly $|[\mathbf i_S]|$ ways to
arrange the entries of the first block in the positions of $S$, and exactly
$|[\mathbf j_{S^c}]|$ ways to arrange the second block in the positions of $S^c$.
Thus exactly
\[
 |[\mathbf i_S]|\,|[\mathbf j_{S^c}]|
\]
ordered members of the global orbit have these two block multisets.
By Lemma~\ref{lem:symmetric-coeff}, each such ordered member has coefficient
\[
 \frac{\widetilde a_{\mathbf i_S\oplus\mathbf j_{S^c}}}
 {|[\mathbf i_S\oplus\mathbf j_{S^c}]|}.
\]
All of them have the same evaluation
$x^{\mathbf i_S}y^{\mathbf j_{S^c}}$ because multiplication is commutative inside
each block.  Their combined contribution is therefore
\[
 \frac{|[\mathbf i_S]|\,|[\mathbf j_{S^c}]|}
 {|[\mathbf i_S\oplus\mathbf j_{S^c}]|}
 \widetilde a_{\mathbf i_S\oplus\mathbf j_{S^c}}
 x^{\mathbf i_S}y^{\mathbf j_{S^c}}
 =C^S_{\mathbf i_S,\mathbf j_{S^c}}x^{\mathbf i_S}y^{\mathbf j_{S^c}}.
\]
Summing over all pairs of block orbits proves
\eqref{eq:mixed-evaluation}.
\end{proof}

\begin{remark}[Normalization check]\label{rem:normalization-audit}
The preceding definitions separate three arrays that should not be
interchanged.  The canonical array \(b\) contains each Fourier coefficient
once.  The orbit-constant array \(\widetilde a\) repeats the same coefficient
on all ordered realizations.  The coefficient of the symmetric form is
\(\widetilde a_{\mathbf i}/|[\mathbf i]|\).  After the split
\([d]=S\sqcup S^c\), summing all ordered realizations with prescribed block
multisets produces precisely
\[
 C^S_{\mathbf i_S,\mathbf j_{S^c}}
 =
 \frac{|[\mathbf i_S]|\,|[\mathbf j_{S^c}]|}
 {|[\mathbf i_S\oplus\mathbf j_{S^c}]|}
 \widetilde a_{\mathbf i_S\oplus\mathbf j_{S^c}}.
\]
Thus the only loss in returning from \(C^S\) to \(\widetilde a\) is
\[
 \frac{|[\mathbf i_S\oplus\mathbf j_{S^c}]|}
 {|[\mathbf i_S]|\,|[\mathbf j_{S^c}]|}
 \leq\binom dm,
\]
which is \eqref{eq:orbit-ratio}.  No additional factorial is suppressed.
The later polarization loss is a different factor,
\(A_{d,m}=\binom{2d}{2m}/\binom dm\).
\end{remark}

}

\section{The recurrence}\label{sec:recurrence}

\subsection{The block estimate}

We now assemble the ingredients from Section~\ref{sec:ingredients}.  Rather
than carrying all of them through a single proof, we separate the argument
into two stages.  The first reduces Blei's mixed-norm inequality to estimates for fixed blocks;
the second estimates each block by orbit normalization, Potts
hypercontractivity, the lower-order Bohnenblust--Hille inequality, and mixed
polarization.

\begin{lemma}[Reduction to block estimates]\label{lem:blei-reduction-recurrence}
Let $2\leq m\leq d/2$, and let
$f\in\mathcal V_{\leq d}(C_q^N)$.  For $S\subset[d]$ with $|S|=m$, set
\begin{equation}\label{eq:QS}
 Q_S:=
 \left[
 \sum_{\mathbf i_S\in\Indup(S)}
 \left(
 \sum_{\mathbf j_{S^c}\in\Indup(S^c)}
 |\widetilde a_{\mathbf i_S\oplus\mathbf j_{S^c}}|^2
 \right)^{p_m/2}
 \right]^{1/p_m}.
\end{equation}
Then
\begin{equation}\label{eq:blei-canonical}
 \|\widehat f\|_{\ell_{p_d}}
 \leq
 \left(
 \prod_{\substack{S\subset[d]\\ |S|=m}}Q_S
 \right)^{1/\binom dm}.
\end{equation}
\end{lemma}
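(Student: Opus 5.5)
The plan is to apply Lemma~\ref{lem:blei-block} with $I:=D_{q,N}$ to the canonical-support array $b$ from \eqref{eq:b-def}, and then compare each mixed norm $M_S(b)$ with $Q_S$ term by term. First, $I$ is a nonempty finite set, so $\Ind([d])=I^{[d]}\cong I^d$. By \eqref{eq:b-def}, $b$ vanishes off $\Indup([d])$ and equals $a$ on it. Hence \eqref{eq:canonical-norm} gives
\[
 \|\widehat f\|_{\ell_{p_d}}
 =\left(\sum_{\mathbf i\in\Ind([d])}|b_{\mathbf i}|^{p_d}\right)^{1/p_d}.
\]
Blei's inequality \eqref{eq:blei-body} (valid for $1\leq m\leq d$, in particular for $2\leq m\leq d/2$) then bounds this by $\bigl(\prod_{|S|=m}M_S(b)\bigr)^{1/\binom dm}$. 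It therefore suffices to show $M_S(b)\leq Q_S$ for each $S$ with $|S|=m$.

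For this comparison, fix $S$ and consider a pair $\mathbf i_S\in\Ind(S)$, $\mathbf j_{S^c}\in\Ind(S^c)$ with $b_{\mathbf i_S\oplus\mathbf j_{S^c}}\neq0$. Then $\mathbf i_S\oplus\mathbf j_{S^c}$ is nondecreasing on $[d]$ with respect to the fixed total order on $D_{q,N}$. Both $S$ and $S^c$ carry the increasing order induced from $[d]$, so the restriction of a nondecreasing map to either set is still nondecreasing. Hence $\mathbf i_S\in\Indup(S)$ and $\mathbf j_{S^c}\in\Indup(S^c)$. Consequently, in the definition of $M_S(b)$ only indices in $\Indup(S)\times\Indup(S^c)$ contribute, and the outer and inner sums may be restricted to these sets.

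On these indices we have the pointwise bound
\[
 |b_{\mathbf i_S\oplus\mathbf j_{S^c}}|\leq|\widetilde a_{\mathbf i_S\oplus\mathbf j_{S^c}}|.
\]
If the combined index is nondecreasing, then it is the representative of its own orbit, so $b=a=\widetilde a$ there by \eqref{eq:atilde-def}. Otherwise $b=0$. Since the expressions defining $M_S$ and $Q_S$ are monotone in the absolute values of the entries (the exponents $2$, $p_m/2$ and $1/p_m$ are positive), this gives $M_S(b)\leq Q_S$. Taking the geometric mean over $S$ then yields \eqref{eq:blei-canonical}.

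There is no real obstacle here. The only point needing care is the restriction step: the index sets $\Indup(S)$ and $\Indup(S^c)$ appearing in $Q_S$ are exactly those that can carry nonzero entries of $b$. This relies on ordering $S$ and $S^c$ increasingly, as in the definition of $\Ind(A)$.
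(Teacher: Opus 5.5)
Your proposal is correct and follows the paper's proof essentially verbatim: you apply Blei's inequality with $I=D_{q,N}$ to the canonical-support array $b$, note that a nonzero entry of $b$ forces both block restrictions to be nondecreasing, and then use $|b|\leq|\widetilde a|$ with monotonicity of the mixed norm to get $M_S(b)\leq Q_S$. Your explicit justification that $b=\widetilde a$ on $\Indup([d])$, because a nondecreasing index is the unique representative of its orbit, is a detail the paper leaves implicit.
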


\begin{proof}
Apply Blei's inequality \eqref{eq:blei-body} with $I=D_{q,N}$ to the
canonical-support array $b$ from \eqref{eq:b-def}.  By
\eqref{eq:canonical-norm},
\[
 \left(\sum_{\mathbf i\in\Ind([d])}|b_{\mathbf i}|^{p_d}\right)^{1/p_d}
 =\|\widehat f\|_{\ell_{p_d}}.
\]
If $b_{\mathbf i_S\oplus\mathbf j_{S^c}}\neq0$, then the global index is
nondecreasing.  Its restrictions to the ordered sets $S$ and $S^c$ are
therefore nondecreasing as well.  Thus only canonical block indices occur in
nonzero summands.  Moreover, by \eqref{eq:atilde-def}--\eqref{eq:b-def},
\[
 |b_{\mathbf i_S\oplus\mathbf j_{S^c}}|
 \leq |\widetilde a_{\mathbf i_S\oplus\mathbf j_{S^c}}|.
\]
Replacing the corresponding entries in each Blei factor gives exactly
\eqref{eq:blei-canonical}.
\end{proof}

\begin{lemma}[Estimate of one block]\label{lem:one-block-estimate}
Under the hypotheses of Lemma~\ref{lem:blei-reduction-recurrence}, for every
$S\subset[d]$ with $|S|=m$,
\begin{equation}\label{eq:QS-final}
 Q_S\leq
 \binom dm
 \left(\frac{m+1}{m-1}\right)^{\gamma_q(d-m)}
 \BHint{m}{q}A_{d,m}\|f\|_\infty.
\end{equation}
\end{lemma}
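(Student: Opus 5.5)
First replace $\widetilde a$ by $C^S$. By \eqref{eq:atilde-C-bound}, every entry satisfies $|\widetilde a_{\mathbf i_S\oplus\mathbf j_{S^c}}|\leq\binom dm|C^S_{\mathbf i_S,\mathbf j_{S^c}}|$, so
\[
 Q_S\leq\binom dm\left[\sum_{\mathbf i_S\in\Indup(S)}\left(\sum_{\mathbf j_{S^c}\in\Indup(S^c)}|C^S_{\mathbf i_S,\mathbf j_{S^c}}|^2\right)^{p_m/2}\right]^{1/p_m}.
\]
This accounts for the factor $\binom dm$. It remains to bound the mixed $\ell_{p_m}(\ell_2)$ norm of $C^S$ by $\left(\frac{m+1}{m-1}\right)^{\gamma_q(d-m)}\BHint{m}{q}A_{d,m}\|f\|_\infty$. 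To do this, read $C^S$ through Lemma~\ref{lem:mixed-evaluation} as the coefficient array of the two-variable function
\[
 F(x,y):=L_f(\zeta(x)^m,\zeta(y)^{d-m})=\sum_{\mathbf i_S,\mathbf j_{S^c}}C^S_{\mathbf i_S,\mathbf j_{S^c}}x^{\mathbf i_S}y^{\mathbf j_{S^c}},
\]
which is bounded by $A_{d,m}\|f\|_\infty$ by Lemma~\ref{lem:qary-mixed-polarization}.

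The key structural point is that distinct canonical blocks give distinct characters. If $C^S_{\mathbf i_S,\mathbf j_{S^c}}\neq0$, the global index is block-affine. Hence $\mathbf i_S$ is block-affine and determines a Fourier index $\alpha$ with $s(\alpha)\leq m$ and $x^{\mathbf i_S}=x^\alpha$. Distinct canonical $\mathbf i_S$ give distinct $\alpha$, because the zeros are fixed by the number of nonzero entries. The same holds for $\mathbf j_{S^c}$, giving characters $y^\beta$ with $s(\beta)\leq d-m$.

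\emph{Step 1: $L_2$ in $y$.} Fix $y$. Then $x\mapsto F(x,y)$ lies in $\mathcal V_{\leq m}(C_q^N)$ with Fourier coefficients $\sum_{\mathbf j}C^S_{\mathbf i_S,\mathbf j}y^{\mathbf j}$. The defining inequality of $\BHint{m}{q}$ gives
\[
 \Bigl(\sum_{\mathbf i_S}\Bigl|\sum_{\mathbf j}C^S_{\mathbf i_S,\mathbf j}y^{\mathbf j}\Bigr|^{p_m}\Bigr)^{1/p_m}\leq\BHint{m}{q}A_{d,m}\|f\|_\infty.
\]
Raise this to the power $p_m$ and integrate over $y\in C_q^N$ with respect to $\mu_q^N$. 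The left side becomes $\sum_{\mathbf i_S}\|g_{\mathbf i_S}\|_{p_m}^{p_m}$, where $g_{\mathbf i_S}(y):=\sum_{\mathbf j}C^S_{\mathbf i_S,\mathbf j}y^{\mathbf j}$.

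\emph{Step 2: from $L_{p_m}$ to $L_2$.} Each $g_{\mathbf i_S}$ has interaction order at most $d-m$ and distinct characters. By Parseval and Corollary~\ref{cor:Potts-factor-body},
\[
 \Bigl(\sum_{\mathbf j}|C^S_{\mathbf i_S,\mathbf j}|^2\Bigr)^{1/2}=\|g_{\mathbf i_S}\|_2\leq\left(\frac{m+1}{m-1}\right)^{\gamma_q(d-m)}\|g_{\mathbf i_S}\|_{p_m}.
\]
Raise this to the power $p_m$, sum over $\mathbf i_S$, and combine with Step 1. This bounds the mixed norm of $C^S$ by $\left(\frac{m+1}{m-1}\right)^{\gamma_q(d-m)}\BHint{m}{q}A_{d,m}\|f\|_\infty$. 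Together with the opening reduction, this gives \eqref{eq:QS-final}.

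The main obstacle is the injectivity check: distinct canonical block indices with nonzero coefficients must correspond to distinct characters in each variable. Without it, Parseval would not identify $\|g_{\mathbf i_S}\|_2$ with the $\ell_2$ norm of its coefficients, and BH would not identify the coefficients of $x\mapsto F(x,y)$. I would handle this by restricting to the support of $C^S$. Note also that Lemma~\ref{lem:qary-mixed-polarization} concerns the diagonal mixed value with $m$ copies of $\zeta(x)$. Symmetry of $L_f$ makes this equal to the evaluation with $\zeta(x)$ in the positions of $S$, so $F$ is exactly the quantity that lemma bounds.
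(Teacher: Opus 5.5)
Your proposal is correct and follows the paper's proof. The paper uses the same four moves: the orbit-ratio reduction from $\widetilde a$ to $C^S$, Parseval plus the Potts factor for each $g_{\mathbf i_S}$, the level-$m$ Bohnenblust--Hille inequality applied to $x\mapsto L_f(\zeta(x)^m,\zeta(y)^{d-m})$, and mixed polarization. The only cosmetic difference is that you integrate the pointwise BH bound over $y$ before invoking Potts, whereas the paper applies Potts first and then bounds the $y$-average by the supremum.
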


\begin{proof}
Fix $S$.  Recall from \eqref{eq:xu-yv} that $x^{\mathbf i_S}$ denotes the
block character associated with the index $\mathbf i_S$.  All sums below are over
$\mathbf i_S\in\Indup(S)$ and
$\mathbf j_{S^c}\in\Indup(S^c)$.
By the orbit-ratio estimate \eqref{eq:atilde-C-bound},
\begin{equation}\label{eq:QS-orbit}
 Q_S\leq\binom dm
 \left[
 \sum_{\mathbf i_S}
 \left(
 \sum_{\mathbf j_{S^c}}|C^S_{\mathbf i_S,\mathbf j_{S^c}}|^2
 \right)^{p_m/2}
 \right]^{1/p_m}.
\end{equation}

For a fixed $\mathbf i_S$, define
\begin{equation}\label{eq:gu}
 \begin{aligned}
 g_{\mathbf i_S}:C_q^N&\longrightarrow\C,\\
 g_{\mathbf i_S}(y)&:=
 \sum_{\mathbf j_{S^c}}
 C^S_{\mathbf i_S,\mathbf j_{S^c}}y^{\mathbf j_{S^c}}.
 \end{aligned}
\end{equation}
Whenever a coefficient is nonzero, the corresponding canonical block index
is block-affine.  Distinct such indices determine distinct Fourier
characters.  Hence Parseval gives
\begin{equation}\label{eq:Parseval-gu}
 \|g_{\mathbf i_S}\|_2^2
 =\sum_{\mathbf j_{S^c}}|C^S_{\mathbf i_S,\mathbf j_{S^c}}|^2.
\end{equation}
Every character in $g_{\mathbf i_S}$ has interaction order at most $d-m$.
Corollary~\ref{cor:Potts-factor-body} therefore yields
\begin{equation}\label{eq:Potts-gu}
 \left(\sum_{\mathbf j_{S^c}}|C^S_{\mathbf i_S,\mathbf j_{S^c}}|^2\right)^{1/2}
 \leq
 \left(\frac{m+1}{m-1}\right)^{\gamma_q(d-m)}
 \|g_{\mathbf i_S}\|_{p_m}.
\end{equation}
Raising to the power $p_m$, summing in $\mathbf i_S$, and using Fubini on
the finite probability space gives
\begin{equation}\label{eq:Fubini-step}
\begin{aligned}
 &\left[
 \sum_{\mathbf i_S}
 \left(\sum_{\mathbf j_{S^c}}|C^S_{\mathbf i_S,\mathbf j_{S^c}}|^2\right)^{p_m/2}
 \right]^{1/p_m}\\
 &\qquad\leq
 \left(\frac{m+1}{m-1}\right)^{\gamma_q(d-m)}
 \sup_{y\in C_q^N}
 \left[
 \sum_{\mathbf i_S}|g_{\mathbf i_S}(y)|^{p_m}
 \right]^{1/p_m}.
\end{aligned}
\end{equation}

Fix now $y\in C_q^N$ and define
\begin{equation}\label{eq:hy}
 \begin{aligned}
 h_y:C_q^N&\longrightarrow\C,\\
 h_y(x)&:=L_f(\zeta(x)^m,\zeta(y)^{d-m}).
 \end{aligned}
\end{equation}
Lemma~\ref{lem:mixed-evaluation} and the definition of
$g_{\mathbf i_S}$ give
\[
 h_y(x)=\sum_{\mathbf i_S}g_{\mathbf i_S}(y)x^{\mathbf i_S}.
\]
Again the nonzero characters are distinct, and each has interaction order at
most $m$.  Thus, by the definition of $\BHint{m}{q}$,
\begin{equation}\label{eq:BH-hy}
 \left[
 \sum_{\mathbf i_S}|g_{\mathbf i_S}(y)|^{p_m}
 \right]^{1/p_m}
 \leq\BHint{m}{q}\|h_y\|_\infty.
\end{equation}
Finally, Lemma~\ref{lem:qary-mixed-polarization} gives
\begin{equation}\label{eq:hy-polarization}
 \sup_{y\in C_q^N}\|h_y\|_\infty
 \leq A_{d,m}\|f\|_\infty.
\end{equation}
Combining \eqref{eq:QS-orbit}, \eqref{eq:Fubini-step},
\eqref{eq:BH-hy}, and \eqref{eq:hy-polarization} proves
\eqref{eq:QS-final}.
\end{proof}

\subsection{The exact recurrence}

\begin{proposition}[Exact recurrence]\label{prop:exact-recursion}
For $2\leq m\leq d/2$,
\begin{equation}\label{eq:exact-recursion}
 \BHint{d}{q}\leq
 \BHint{m}{q}
 \left(\frac{m+1}{m-1}\right)^{\gamma_q(d-m)}
 \binom{2d}{2m}.
\end{equation}
\end{proposition}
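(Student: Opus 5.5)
The recurrence follows by combining Lemma~\ref{lem:blei-reduction-recurrence} with Lemma~\ref{lem:one-block-estimate}, and then simplifying the constant with Lemma~\ref{lem:qary-combinatorial-identity}. Fix $N$ and $f\in\mathcal V_{\leq d}(C_q^N)$, with $2\leq m\leq d/2$. Lemma~\ref{lem:blei-reduction-recurrence} bounds $\|\widehat f\|_{\ell_{p_d}}$ by the geometric mean of the block quantities $Q_S$ over all $S\subset[d]$ with $|S|=m$. Lemma~\ref{lem:one-block-estimate} bounds each $Q_S$ by the same quantity,
\[
 \binom dm\left(\frac{m+1}{m-1}\right)^{\gamma_q(d-m)}\BHint{m}{q}A_{d,m}\|f\|_\infty ,
\]
which does not depend on $S$. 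The geometric mean of $\binom dm$ copies of this bound is therefore the bound itself.

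Next I would insert the exact value $A_{d,m}=\binom{2d}{2m}/\binom dm$ from \eqref{eq:Adm-exact-q}. The orbit-normalization factor $\binom dm$ cancels against the denominator of $A_{d,m}$, leaving
\[
 \|\widehat f\|_{\ell_{p_d}}\leq \BHint{m}{q}\left(\frac{m+1}{m-1}\right)^{\gamma_q(d-m)}\binom{2d}{2m}\|f\|_\infty .
\]
The left-hand side is the $\ell_{p_d}$-sum over $s(\alpha)\leq d$, which by \eqref{eq:canonical-norm} is exactly the quantity in \eqref{eq:interaction-BH-intro}. The constant is independent of $N$ and of $f$, so taking the least admissible constant gives \eqref{eq:exact-recursion}.

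The only points needing care are hypotheses, not estimates. First, $\BHint{m}{q}$ must be finite, so that the lemmas produce a meaningful bound. This holds by the known exponential bound \eqref{eq:DGMMM-intro-bound}; alternatively, if $\BHint{m}{q}=\infty$ the inequality is vacuous. Second, the range $2\leq m\leq d/2$ must be compatible with all ingredients. The condition $m\geq2$ is needed for $p_m>1$ in Corollary~\ref{cor:Potts-factor-body}, and $m<d$ follows from $m\leq d/2$. I expect no genuine obstacle here, since all the analytic work is contained in Lemma~\ref{lem:one-block-estimate}. The step most worth double-checking is that the $\binom dm$ from \eqref{eq:atilde-C-bound} and the $1/\binom dm$ in $A_{d,m}$ are indeed the same factor, so that they cancel exactly and leave $\binom{2d}{2m}$.
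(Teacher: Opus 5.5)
Your proposal is correct and follows the paper's proof: you combine Lemma~\ref{lem:blei-reduction-recurrence} with the $S$-independent bound of Lemma~\ref{lem:one-block-estimate}, so the geometric mean over the blocks collapses, and you then use $\binom dm A_{d,m}=\binom{2d}{2m}$ from \eqref{eq:Adm-exact-q} before taking the supremum over $N$ and $f$. Your remarks on the finiteness of $\BHint{m}{q}$ and on why $m\geq2$ is needed are harmless additions that the paper leaves implicit.
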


\begin{proof}
By Lemma~\ref{lem:blei-reduction-recurrence} and
Lemma~\ref{lem:one-block-estimate},
\[
 \|\widehat f\|_{\ell_{p_d}}
 \leq
 \binom dm
 \left(\frac{m+1}{m-1}\right)^{\gamma_q(d-m)}
 \BHint{m}{q}A_{d,m}\|f\|_\infty,
\]
since the same bound holds for every Blei factor $Q_S$.  The exact
polarization identity \eqref{eq:Adm-exact-q} gives
\[
 \binom dm A_{d,m}=\binom{2d}{2m}.
\]
Taking the supremum over $N$ and over nonzero
$f\in\mathcal V_{\leq d}(C_q^N)$ proves \eqref{eq:exact-recursion}.
\end{proof}

\subsection{Initial control for the iteration}\label{subsec:coarse-start}

Exponential estimates in the interaction order are already available in the
literature; see the bound recalled in \eqref{eq:DGMMM-intro-bound}.  For the
asymptotic argument below we only need
$\log\BHint{r}{q}=O_q(r)$.  We derive this weaker statement directly in three
short steps.

\begin{lemma}[Interaction order one]\label{lem:interaction-one}
For every fixed $q\geq2$,
\begin{equation}\label{eq:B1-elementary}
 \BHint{1}{q}\leq 1+2\pi(q-1).
\end{equation}
\end{lemma}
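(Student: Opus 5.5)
The plan is to reduce everything to one-dimensional pieces and use a phase-averaging argument. Since $p_1=1$, we must bound the $\ell_1$ norm of the Fourier coefficients. If $f\in\mathcal V_{\leq1}(C_q^N)$, we can write
\[
 f(x)=c+\sum_{j=1}^N\varphi_j(x_j),\qquad
 \varphi_j(a):=\sum_{u=1}^{q-1}\widehat f(\alpha^{(j,u)})a^u ,
\]
where $c=\widehat f(0)$ and $\alpha^{(j,u)}$ is the index with $u$ in position $j$ and zeros elsewhere. Each $\varphi_j$ is a function on $C_q$ with $\int_{C_q}\varphi_j\,d\mu_q=0$. Two trivial bounds handle the coefficients once the $\varphi_j$ are controlled. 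First, $|c|=|\int f\,d\mu_q^N|\leq\|f\|_\infty$. Second, by \eqref{eq:Fourier-coeff-intro} on $C_q$, $|\widehat f(\alpha^{(j,u)})|\leq\|\varphi_j\|_{L_\infty(C_q)}$ for each $u$. Hence
\[
 \|\widehat f\|_{\ell_1}\leq\|f\|_\infty+(q-1)\sum_{j=1}^N\|\varphi_j\|_\infty ,
\]
and it suffices to show that $\sum_j\|\varphi_j\|_\infty\leq2\pi\|f\|_\infty$.

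For this, fix $\theta\in[0,2\pi)$. Because the variables $x_j$ can be chosen independently, we pick for each $j$ a point $a_j(\theta)\in C_q$ maximizing $\operatorname{Re}(e^{i\theta}\varphi_j(a))$. Evaluating $f$ at $x=(a_j(\theta))_j$ gives
\[
 \sum_j\max_{a\in C_q}\operatorname{Re}\bigl(e^{i\theta}\varphi_j(a)\bigr)
 =\operatorname{Re}\bigl(e^{i\theta}(f(x)-c)\bigr)\leq2\|f\|_\infty .
\]
Since $\varphi_j$ has mean zero, the maximum of $\operatorname{Re}(e^{i\theta}\varphi_j)$ over $C_q$ is nonnegative. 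So for any fixed $a_0\in C_q$,
\[
 \max_a\operatorname{Re}\bigl(e^{i\theta}\varphi_j(a)\bigr)\geq\max\bigl\{\operatorname{Re}\bigl(e^{i\theta}\varphi_j(a_0)\bigr),0\bigr\}.
\]
Averaging over $\theta$ and using $\frac1{2\pi}\int_0^{2\pi}\max\{\operatorname{Re}(e^{i\theta}w),0\}\,d\theta=|w|/\pi$, then choosing $a_0$ to be a maximizer of $|\varphi_j|$, gives
\[
 \frac1\pi\sum_j\|\varphi_j\|_\infty\leq2\|f\|_\infty .
\]
This is the required estimate, and combining it with the first display yields \eqref{eq:B1-elementary}.

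The only step needing care is the averaging one. The key point is the nonnegativity of the maximum, which comes from the mean-zero property of $\varphi_j$. Without it we could not discard the negative part and compare with $|\varphi_j(a_0)|$. The rest is finite bookkeeping, and the bound is uniform in $N$ because each $x_j$ is chosen independently.
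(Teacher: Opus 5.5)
Your proof is correct and takes essentially the same approach as the paper: the same decomposition $f=c+\sum_j\varphi_j(x_j)$, the mean-zero property to make $\max_a\operatorname{Re}(e^{i\theta}\varphi_j(a))\geq0$, and the angular average producing the factor $1/\pi$. The only cosmetic difference is that you bound the sum for every $\theta$ and then average the inequality, whereas the paper averages first and then selects a single good angle $\theta_0$.
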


\begin{proof}
Let $f\in\mathcal V_{\leq1}(C_q^N)$.  For $j\in[N]$, let
$e_j\in\mathbb Z_q^N$ denote the $j$th coordinate vector and write
\[
 f(x)=c+\sum_{j=1}^Ng_j(x_j),
 \qquad c:=\widehat f(0),
\]
where
\[
 g_j(a):=\sum_{u=1}^{q-1}\widehat f(u e_j)a^u,
 \qquad a\in C_q.
\]
Each $g_j$ has mean zero.  Choose $a_j\in C_q$ with
$|g_j(a_j)|=\|g_j\|_\infty$, and for $\theta\in[0,2\pi]$ set
\[
 \Psi_j(\theta):=
 \max_{a\in C_q}\operatorname{Re}(e^{-i\theta}g_j(a)).
\]
The mean-zero property implies $\Psi_j(\theta)\geq0$.  Writing
$g_j(a_j)=\|g_j\|_\infty e^{i\varphi_j}$ when $g_j(a_j)\neq0$, we obtain
\[
 \Psi_j(\theta)\geq
 \|g_j\|_\infty\max\{0,\cos(\theta-\varphi_j)\}.
\]
Since
\[
 \int_0^{2\pi}\max\{0,\cos\theta\}\,d\theta=2,
\]
we have
\[
 \frac1{2\pi}\int_0^{2\pi}\Psi_j(\theta)\,d\theta
 \geq\frac1\pi\|g_j\|_\infty.
\]
After summing in $j$, some common angle $\theta_0$ satisfies
\[
 \sum_{j=1}^N\Psi_j(\theta_0)
 \geq\frac1\pi\sum_{j=1}^N\|g_j\|_\infty.
\]
Choosing $x_j\in C_q$ realizing the maximum in
$\Psi_j(\theta_0)$ gives
\[
 \frac1\pi\sum_{j=1}^N\|g_j\|_\infty
 \leq
 \left|\sum_{j=1}^Ng_j(x_j)\right|
 \leq\|f-c\|_\infty
 \leq2\|f\|_\infty,
\]
because $c$ is the average of $f$.  Finally,
$|\widehat f(u e_j)|\leq\|g_j\|_\infty$, so
\[
 |\widehat f(0)|+
 \sum_{j=1}^N\sum_{u=1}^{q-1}|\widehat f(u e_j)|
 \leq
 \bigl(1+2\pi(q-1)\bigr)\|f\|_\infty.
\]
This is \eqref{eq:B1-elementary}.
\end{proof}

\begin{lemma}[Endpoint $L_1$--$L_2$ estimate]\label{lem:endpoint-L1-L2}
If $g:C_q^N\to\C$ has interaction order at most $k$, then
\begin{equation}\label{eq:L1-L2-coarse}
 \|g\|_2\leq2^{3\gamma_q k}\|g\|_1.
\end{equation}
\end{lemma}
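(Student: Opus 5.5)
The plan is to interpolate between $L_1$ and $L_p$ by H\"older's inequality and use Corollary~\ref{cor:low-support-Lp-L2} at a fixed exponent, namely $p=3/2$. For $1<p<2$, log-convexity of $L_r$ norms gives
\[
 \|g\|_p\leq\|g\|_1^{1-\theta}\|g\|_2^{\theta},
 \qquad \frac1p=\frac{1-\theta}{1}+\frac{\theta}{2},
\]
so $\theta=2(1-1/p)=2(p-1)/p$. Corollary~\ref{cor:low-support-Lp-L2} gives $\|g\|_2\leq(p-1)^{-\gamma_q k}\|g\|_p$. Combining the two bounds yields
\[
 \|g\|_2\leq(p-1)^{-\gamma_q k}\|g\|_1^{1-\theta}\|g\|_2^{\theta}.
\]

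If $g=0$ there is nothing to prove. Otherwise $\|g\|_2>0$, and dividing by $\|g\|_2^\theta$ and taking the $(1-\theta)$-th root gives
\[
 \|g\|_2\leq(p-1)^{-\gamma_q k/(1-\theta)}\|g\|_1.
\]

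Now specialize to $p=3/2$. Then $1/p=2/3$, so $\theta=2/3$ and $1-\theta=1/3$. Since $p-1=1/2$, the constant is
\[
 (1/2)^{-3\gamma_q k}=2^{3\gamma_q k},
\]
which is exactly \eqref{eq:L1-L2-coarse}. Note that the general formula $(1-\theta)=(2-p)/p$ gives the exponent $\gamma_q k\,p/(2-p)\cdot\log\frac1{p-1}$. This happens to produce the clean constant $2^{3\gamma_q k}$ at $p=3/2$; no optimization over $p$ is needed, since only the $O_q(k)$ order matters for the later initial-control argument.

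There is essentially no real obstacle here. The only points to check are that the hypothesis of Corollary~\ref{cor:low-support-Lp-L2} (namely $1<p\leq2$ and Fourier support in $s(\alpha)\leq k$) is satisfied by $g$, and that the interpolation inequality is valid on the probability space $(C_q^N,\mu_q^N)$. The latter is just H\"older's inequality applied to $|g|^p=|g|^{p(1-\theta)}|g|^{p\theta}$ with conjugate exponents $1/(p(1-\theta))$ and $2/(p\theta)$. Both checks are routine.
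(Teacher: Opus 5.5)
Your proposal is correct and matches the paper's proof: Corollary~\ref{cor:low-support-Lp-L2} at $p=3/2$, the H\"older interpolation $\|g\|_{3/2}\leq\|g\|_1^{1/3}\|g\|_2^{2/3}$ on the probability space, then dividing by $\|g\|_2^{2/3}$ and cubing, with $g=0$ handled separately. Your general-$p$ bookkeeping, $1-\theta=(2-p)/p$ and the conjugate exponents $1/(2-p)$ and $1/(p-1)$, is also correct.
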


\begin{proof}
Corollary~\ref{cor:low-support-Lp-L2} with $p=3/2$ gives
\[
 \|g\|_2\leq2^{\gamma_q k}\|g\|_{3/2}.
\]
On the probability space $(C_q^N,\mu_q^N)$,
\[
 \|g\|_{3/2}\leq\|g\|_1^{1/3}\|g\|_2^{2/3}.
\]
For $g\neq0$, divide by $\|g\|_2^{2/3}$ and cube.  The zero function is
trivial.
\end{proof}

\begin{proposition}[Coarse exponential control]\label{prop:coarse-bound}
For every fixed $q\geq2$ and every integer $r\geq2$,
\begin{equation}\label{eq:coarse-exponential}
 \BHint{r}{q}
 \leq
 \BHint{1}{q}\,2^{3\gamma_q(r-1)}\binom{2r}{2}.
\end{equation}
Consequently,
\begin{equation}\label{eq:coarse-bound-log}
 \log\BHint{r}{q}=O_q(r).
\end{equation}
\end{proposition}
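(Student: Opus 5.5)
The plan is to rerun the proof of Proposition~\ref{prop:exact-recursion} with $m=1$ in place of $m\geq2$. The recurrence there requires $m\geq2$ only because Corollary~\ref{cor:Potts-factor-body} degenerates at $p_1=1$: the factor $((m+1)/(m-1))^{\gamma_q(d-m)}$ is infinite. For the coarse bound I would replace that Potts step by the endpoint estimate of Lemma~\ref{lem:endpoint-L1-L2}, which controls $\|g\|_2$ by $\|g\|_1$ at cost $2^{3\gamma_q k}$. With $d=r$, $m=1$, $k=r-1$ and $p_1=1$, the target inequality~\eqref{eq:coarse-exponential} is exactly
\[
\BHint{1}{q}\cdot 2^{3\gamma_q(r-1)}\cdot\binom{2r}{2},
\]
which is the shape of~\eqref{eq:exact-recursion} with the Potts factor swapped.

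The chain of inequalities is as follows.
\begin{itemize}
\item Blei's inequality (Lemma~\ref{lem:blei-block}) holds for every $1\leq m\leq d$. Hence Lemma~\ref{lem:blei-reduction-recurrence} goes through verbatim with $m=1$, giving $\|\widehat f\|_{\ell_{p_r}}\leq(\prod_{|S|=1}Q_S)^{1/r}$ with $p_1=1$ in the definition of $Q_S$. The condition $m\leq d/2$ was never used in that reduction, and $r\geq2$ makes $S^c$ nonempty.
\item For a fixed singleton $S$, the orbit bound~\eqref{eq:atilde-C-bound} costs a factor $\binom r1=r$.
\item The block functions $g_{\mathbf i_S}$ of~\eqref{eq:gu} have interaction order at most $r-1$. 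Lemma~\ref{lem:endpoint-L1-L2} then gives $\|g_{\mathbf i_S}\|_2\leq 2^{3\gamma_q(r-1)}\|g_{\mathbf i_S}\|_1$.
\item Since $p_1=1$, summing in $\mathbf i_S$ and using Fubini bounds the $\ell_1$-sum of these $L_1$ norms by $\sup_y\sum_{\mathbf i_S}|g_{\mathbf i_S}(y)|$.
\item The function $h_y(x)=L_f(\zeta(x),\zeta(y)^{r-1})$ has interaction order at most $1$, and its coefficients are the $g_{\mathbf i_S}(y)$. So the definition of $\BHint{1}{q}$, finite by Lemma~\ref{lem:interaction-one}, bounds that $\ell_1$ sum by $\BHint{1}{q}\|h_y\|_\infty$.
\item Lemma~\ref{lem:qary-mixed-polarization}, valid for all $0\leq m\leq d$, gives $\|h_y\|_\infty\leq A_{r,1}\|f\|_\infty$.
\end{itemize}
Finally, Lemma~\ref{lem:qary-combinatorial-identity} gives $\binom r1A_{r,1}=\binom{2r}{2}$, and multiplying the factors yields~\eqref{eq:coarse-exponential}.

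The consequence~\eqref{eq:coarse-bound-log} is immediate. Taking logarithms,
\[
\log\BHint{r}{q}\leq \log(1+2\pi(q-1))+3\gamma_q(r-1)\log2+\log\binom{2r}{2},
\]
and the last term is $O(\log r)$. The whole right-hand side is therefore $O_q(r)$.

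The main point to check is that nothing in Lemmas~\ref{lem:one-block-estimate}, \ref{lem:mixed-evaluation} and the orbit bookkeeping silently used $m\geq2$. The Parseval identification of distinct block-affine characters, the orbit-ratio bound and the mixed-evaluation identity are purely combinatorial and hold for $|S|=1$. The only genuinely $m$-dependent analytic input is the Potts step, which is exactly the step replaced by Lemma~\ref{lem:endpoint-L1-L2}. I would also confirm that the Fubini step with exponent $p_1=1$ reads correctly, namely $\sum_{\mathbf i_S}\|g_{\mathbf i_S}\|_1=\E_y\sum_{\mathbf i_S}|g_{\mathbf i_S}(y)|\leq\sup_y\sum_{\mathbf i_S}|g_{\mathbf i_S}(y)|$.
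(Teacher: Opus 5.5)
Your proposal is correct and follows the paper's proof essentially step for step. It uses Blei's inequality with singleton blocks and $p_1=1$, the orbit factor $\binom r1=r$, Lemma~\ref{lem:endpoint-L1-L2} in place of the degenerate Potts factor, Fubini, the level-one inequality $\BHint{1}{q}$ applied to $h_y(x)=L_f(\zeta(x),\zeta(y)^{r-1})$, mixed polarization with $rA_{r,1}=\binom{2r}{2}$, and finally logarithms together with Lemma~\ref{lem:interaction-one}.
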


\begin{proof}
We use a singleton block directly; this is the endpoint version of the
coefficient decomposition used above, rather than an application of
Lemmas~\ref{lem:blei-reduction-recurrence}--\ref{lem:one-block-estimate}, whose
hypotheses require $m\geq2$.

Fix $S\subset[r]$ with $|S|=1$.  We use the notation of the orbit
construction in Section~\ref{sec:orbits} with $d$ replaced by $r$ and
$m=1$; in particular, $C^S$ denotes the corresponding two-block coefficient
array.  Blei's inequality
\eqref{eq:blei-body}, now with $m=1$ and hence $p_1=1$, gives
\begin{equation}\label{eq:coarse-Blei-singleton}
 \|\widehat f\|_{\ell_{p_r}}
 \leq
 \left(
 \prod_{|S|=1} Q_S^{(1)}
 \right)^{1/r},
\end{equation}
where
\[
 Q_S^{(1)}:=
 \sum_{\mathbf i_S\in\Indup(S)}
 \left(
 \sum_{\mathbf j_{S^c}\in\Indup(S^c)}
 |\widetilde a_{\mathbf i_S\oplus\mathbf j_{S^c}}|^2
 \right)^{1/2}.
\]
As in the proof of Lemma~\ref{lem:one-block-estimate}, the orbit-ratio bound
\eqref{eq:atilde-C-bound} gives
\begin{equation}\label{eq:coarse-orbit-singleton}
 Q_S^{(1)}
 \leq r
 \sum_{\mathbf i_S}
 \left(
 \sum_{\mathbf j_{S^c}}
 |C^S_{\mathbf i_S,\mathbf j_{S^c}}|^2
 \right)^{1/2}.
\end{equation}
For fixed $\mathbf i_S$, set
\[
 g_{\mathbf i_S}(y)
 :=\sum_{\mathbf j_{S^c}}
 C^S_{\mathbf i_S,\mathbf j_{S^c}}y^{\mathbf j_{S^c}}.
\]
The nonzero terms are distinct characters, so Parseval and
Lemma~\ref{lem:endpoint-L1-L2} yield
\[
 \left(
 \sum_{\mathbf j_{S^c}}
 |C^S_{\mathbf i_S,\mathbf j_{S^c}}|^2
 \right)^{1/2}
 =\|g_{\mathbf i_S}\|_2
 \leq
 2^{3\gamma_q(r-1)}\|g_{\mathbf i_S}\|_1.
\]
Summing in $\mathbf i_S$ and using Fubini on the normalized counting measure,
\begin{equation}\label{eq:coarse-Fubini-singleton}
 \sum_{\mathbf i_S}
 \left(
 \sum_{\mathbf j_{S^c}}
 |C^S_{\mathbf i_S,\mathbf j_{S^c}}|^2
 \right)^{1/2}
 \leq
 2^{3\gamma_q(r-1)}
 \sup_{y\in C_q^N}\sum_{\mathbf i_S}|g_{\mathbf i_S}(y)|.
\end{equation}
For fixed $y$, define
\[
 h_y(x):=L_f(\zeta(x),\zeta(y)^{r-1}).
\]
Lemma~\ref{lem:mixed-evaluation} gives
\[
 h_y(x)=\sum_{\mathbf i_S}g_{\mathbf i_S}(y)x^{\mathbf i_S}.
\]
The nonzero characters in this sum are distinct and have interaction order at
most one.  Since $p_1=1$, the definition of $\BHint{1}{q}$ therefore gives
\[
 \sum_{\mathbf i_S}|g_{\mathbf i_S}(y)|
 \leq \BHint{1}{q}\,\|h_y\|_\infty.
\]
Finally, Lemma~\ref{lem:qary-mixed-polarization} with $m=1$ yields
\[
 \sup_y\|h_y\|_\infty\leq A_{r,1}\|f\|_\infty.
\]
Combining this with \eqref{eq:coarse-orbit-singleton} and
\eqref{eq:coarse-Fubini-singleton}, we obtain, for every singleton $S$,
\[
 Q_S^{(1)}
 \leq
 r\,2^{3\gamma_q(r-1)}\BHint{1}{q}A_{r,1}\|f\|_\infty.
\]
Insert this bound into \eqref{eq:coarse-Blei-singleton}.  Since the right-hand
side is independent of $S$,
\[
 \|\widehat f\|_{\ell_{p_r}}
 \leq
 r\,2^{3\gamma_q(r-1)}\BHint{1}{q}A_{r,1}\|f\|_\infty.
\]
Taking the supremum over $N$ and nonzero
$f\in\mathcal V_{\leq r}(C_q^N)$, and using
\[
 rA_{r,1}=\binom{2r}{2}
\]
from \eqref{eq:Adm-exact-q}, proves \eqref{eq:coarse-exponential}.  Taking
logarithms and using Lemma~\ref{lem:interaction-one} gives
\eqref{eq:coarse-bound-log}.
\end{proof}

\subsection{The asymptotic balance}\label{subsec:asymptotic-balance}

The final step uses only the exact recurrence and the coarse estimate above.
We isolate the elementary asymptotic calculation so that the proof of
Theorem~A contains no additional bookkeeping.

\begin{lemma}[Balanced splitting]\label{lem:balanced-splitting}
Fix $q\geq2$ and set
\begin{equation}\label{eq:xd}
 x_d:=\sqrt{\frac{2\gamma_qd}{\log d}},
 \qquad
 m_d:=\lfloor x_d\rfloor.
\end{equation}
Then $m_d\to\infty$, $m_d=o(d)$, and, as $d\to\infty$,
\begin{align}
 \gamma_q(d-m_d)\log\frac{m_d+1}{m_d-1}
 &\leq
 \sqrt{2\gamma_q}\sqrt{d\log d}+O_q(\log d),
 \label{eq:qary-HC-asymptotic}\\
 \log\binom{2d}{2m_d}
 &=
 \sqrt{2\gamma_q}\sqrt{d\log d}
 +O_q\!\left(\sqrt{\frac d{\log d}}\log\log d\right).
 \label{eq:qary-binomial-asymptotic}
\end{align}
\end{lemma}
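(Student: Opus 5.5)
The plan is a direct asymptotic computation from the definition \eqref{eq:xd}. First, the qualitative claims. Since $x_d=\sqrt{2\gamma_q d/\log d}\to\infty$ and $x_d/d\to0$, and $x_d-1<m_d\leq x_d$, we get $m_d\to\infty$ and $m_d=o(d)$. In particular $m_d\geq2$ and $m_d\leq d/2$ for large $d$, so the splitting is admissible in Proposition~\ref{prop:exact-recursion}. Throughout we write $m=m_d$ and $x=x_d$, and we use the two identities
\[
 \frac{2\gamma_q d}{x}=\sqrt{2\gamma_q}\sqrt{d\log d}=x\log d .
\]

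\emph{Potts term \eqref{eq:qary-HC-asymptotic}.} We use
\[
 \log\frac{m+1}{m-1}=2\operatorname{artanh}\frac1m=\frac2m+O(m^{-3}),
\]
and drop the favourable factor $d-m\leq d$. This gives
\[
 \gamma_q(d-m)\log\frac{m+1}{m-1}\leq \frac{2\gamma_q d}{m}+O_q\bigl(d/m^3\bigr).
\]
Since $m>x-1$, we have
\[
 \frac1m\leq\frac1{x-1}=\frac1x+\frac1{x(x-1)}.
\]
Hence
\[
 \frac{2\gamma_q d}{m}\leq\frac{2\gamma_q d}{x}+O_q\Bigl(\frac d{x^2}\Bigr)
 =\sqrt{2\gamma_q}\sqrt{d\log d}+O_q(\log d),
\]
because $d/x^2=\log d/(2\gamma_q)$. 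The remaining error satisfies $d/m^3\asymp(\log d)^{3/2}/\sqrt d=o(1)$, which gives \eqref{eq:qary-HC-asymptotic}.

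\emph{Binomial term \eqref{eq:qary-binomial-asymptotic}.} Here we need a two-sided estimate, and this is the only step requiring care. Take $n=2d$ and $k=2m=o(n)$. By Stirling's formula, or more elementarily from
\[
 \Bigl(\frac nk\Bigr)^k\leq\binom nk\leq\Bigl(\frac{en}{k}\Bigr)^k,
\]
we have
\[
 \log\binom nk=k\log\frac nk+O(k).
\]
This crude form suffices, because the target error $\sqrt{d/\log d}\,\log\log d$ dominates $k\asymp\sqrt{d/\log d}$. Thus
\[
 \log\binom{2d}{2m}=2m\log\frac dm+O(m).
\]
Next we expand the logarithm:
\[
 \log\frac dm=\log d-\log x+O(1/x)=\tfrac12\log d+\tfrac12\log\log d+O_q(1).
\]
Therefore
\[
 \log\binom{2d}{2m}=m\log d+O_q(m\log\log d).
\]
Finally, $m\log d=x\log d+O(\log d)=\sqrt{2\gamma_q}\sqrt{d\log d}+O(\log d)$, and $m\log\log d=O_q\bigl(\sqrt{d/\log d}\,\log\log d\bigr)$, which absorbs the $O(\log d)$ term. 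This yields \eqref{eq:qary-binomial-asymptotic}.

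The only delicate point is keeping the lower-order terms of the binomial estimate within the stated error, and in particular confirming that the $O(k)$ Stirling remainder and the floor in $m_d$ contribute only $O(\sqrt{d/\log d}\,\log\log d)$. This is handled by the elementary bounds above, so no sharper form of Stirling's formula is needed.
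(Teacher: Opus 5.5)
Your proposal is correct and follows essentially the same route as the paper: you bound the Potts term by $2\gamma_q d/x_d+O_q(\log d)$ using $m_d=x_d+O(1)$, and you expand $\log\binom{2d}{2m_d}=2m_d\log(d/m_d)+O(m_d)$ with $\log(d/m_d)=\tfrac12\log d+O_q(\log\log d)$. The differences are cosmetic. You use the $\operatorname{artanh}$ series where the paper uses $\log(1+t)\le t$ with $t=2/(m_d-1)$, and you use the elementary bounds $(n/k)^k\le\binom nk\le(en/k)^k$ in place of Stirling's formula.
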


\begin{proof}
Since $m_d=x_d+O(1)$,
\[
 \frac1{m_d-1}=\frac1{x_d}+O_q\!\left(\frac1{x_d^2}\right).
\]
Using $\log(1+t)\leq t$,
\[
 \gamma_q(d-m_d)\log\frac{m_d+1}{m_d-1}
 \leq\frac{2\gamma_q(d-m_d)}{m_d-1}.
\]
Now $x_d^2=2\gamma_qd/\log d$, and therefore
\[
 \frac{2\gamma_qd}{x_d}
 =\sqrt{2\gamma_q}\sqrt{d\log d},
 \qquad
 \frac d{x_d^2}=O_q(\log d),
\]
which proves \eqref{eq:qary-HC-asymptotic}.

For the second estimate, $m_d=o(d)$ and Stirling's formula gives
\[
 \log\binom{2d}{2m_d}
 =2m_d\log\frac d{m_d}+O(m_d).
\]
At the choice \eqref{eq:xd},
\[
 \log\frac d{m_d}
 =\frac12\log d+O_q(\log\log d),
\]
and substitution gives \eqref{eq:qary-binomial-asymptotic}.
\end{proof}

All quantitative estimates needed for Theorem~A are now in place.

\section{Proof of Theorem A}\label{sec:proof-main}

\begin{proof}[Proof of Theorem~A]
Fix $q$ and choose $m=m_d$ as in Lemma~\ref{lem:balanced-splitting}.  For all
sufficiently large $d$, $2\leq m\leq d/2$, so Proposition~\ref{prop:exact-recursion}
applies.  Taking logarithms gives
\[
 \log\BHint{d}{q}
 \leq
 \log\BHint{m}{q}
 +\gamma_q(d-m)\log\frac{m+1}{m-1}
 +\log\binom{2d}{2m}.
\]
By Proposition~\ref{prop:coarse-bound},
\[
 \log\BHint{m}{q}=O_q(m)
 =O_q\!\left(\sqrt{\frac d{\log d}}\right).
\]
Together with Lemma~\ref{lem:balanced-splitting}, this yields
\[
 \log\BHint{d}{q}
 \leq
 2\sqrt{2\gamma_q}\sqrt{d\log d}
 +O_q\!\left(\sqrt{\frac d{\log d}}\log\log d\right).
\]
Since $c_q=2\sqrt{2\gamma_q}$, Theorem~\ref{thm:main} follows.
\end{proof}

{\color{black}
\begin{remark}[A second-order refinement]\label{rem:second-order-refinement}
The same recurrence contains slightly more quantitative information than is
needed for Theorem~A.  If one keeps the next term in the Stirling expansion
of \(\log\binom{2d}{2m}\) at the choice
\(m\sim\sqrt{2\gamma_qd/\log d}\), then
\[
 \log\BHint{d}{q}
 \leq c_q\sqrt{d\log d}
 +\frac{c_q}{2}\sqrt{\frac d{\log d}}\log\log d
 +O_q\!\left(\sqrt{\frac d{\log d}}\right).
\]
No additional ingredient is required; this is only a more precise asymptotic
reading of the recurrence.  We keep Theorem~A in the simpler form because the
leading subexponential scale is the main point.
\end{remark}
}

\section{A Bohr-radius consequence}\label{sec:bohr}

{\color{black}
We finish with a Bohr-radius consequence of Theorem~A for exact interaction
layers.  This application belongs to a line of work developed by several
authors.  The lower estimate uses the classical Bohnenblust--Hille--H\"older
argument that is standard in multidimensional Bohr-radius problems; see, for
instance, \cite{BPSS}.  In the finite-group setting, Slote, Volberg and Zhang
introduced and studied Bohr-type consequences of cyclic Bohnenblust--Hille
inequalities; see \cite[Section~4.2]{SVZ}.  Defant, Galicer, Mansilla,
Masty\l o and Muro subsequently developed support-sensitive and spherical
Sidon estimates on Hamming schemes, obtaining the corresponding scale up to
factors exponential in the interaction level
\cite[Theorem~3.3]{DGMMM}.  In the Boolean case, Defant, Masty\l o and
P\'erez obtained precise homogeneous Boolean-radius estimates
\cite[Theorem~3.1]{DMPBohr}.

Our point here is to combine these ideas with the subexponential estimate of
Theorem~A.  Since $(\BHint{d}{q})^{1/d}=1+o(1)$, the Bohnenblust--Hille
constant disappears at the $d$th-root scale.  Together with the exact
cardinality of an interaction layer and a standard probabilistic random-sign
argument, this gives matching $d$th-root asymptotics for the interaction
Bohr radius.  Thus the argument should be viewed as a refinement, in the
present cyclic interaction-order setting, of the Bohr/Sidon framework already
present in the works cited above, rather than as an unrelated construction.}

For $1\leq d\leq N$, let
\[
 \mathcal H_{N,q}^{(d)}
 :=\left\{f:C_q^N\to\C:
 \widehat f(\alpha)=0\ \text{whenever }|\supp(\alpha)|\neq d\right\}.
\]
Define the interaction Bohr radius of the $d$th layer by
\begin{equation}\label{eq:bohr-radius-definition}
 R_{N,q}^{(d)}
 :=\sup\left\{0\leq r\leq1:
 r^d\sum_{|\supp(\alpha)|=d}|\widehat f(\alpha)|
 \leq\|f\|_\infty
 \ \text{for every }f\in\mathcal H_{N,q}^{(d)}\right\}.
\end{equation}

Equivalently, let the Sidon constant of the exact interaction layer be
\begin{equation}\label{eq:layer-sidon-definition}
 S_{N,q}^{(d)}
 :=\sup_{0\neq f\in\mathcal H_{N,q}^{(d)}}
 \frac{\displaystyle\sum_{|\supp(\alpha)|=d}|\widehat f(\alpha)|}
      {\|f\|_\infty}.
\end{equation}
Since a single character belongs to \(\mathcal H_{N,q}^{(d)}\) and has
coefficient sum and supremum norm both equal to one, \(S_{N,q}^{(d)}\geq1\).
Thus \eqref{eq:bohr-radius-definition} is exactly the radial form of
\eqref{eq:layer-sidon-definition}, and
\begin{equation}\label{eq:bohr-sidon-identity}
 \boxed{R_{N,q}^{(d)}=\bigl(S_{N,q}^{(d)}\bigr)^{-1/d}.}
\end{equation}
In particular, the terminology ``Bohr radius'' is intrinsic to the
interaction grading: the multiplier $r^{s(\alpha)}$ acts as the scalar $r^d$
on the entire $d$th interaction layer.

{\color{black}The upper estimate uses the classical probabilistic sign method (of Kahane--Salem--Zygmund type); for completeness we give the short finite-group argument, based only on Hoeffding's inequality and a union bound.}
\begin{lemma}[Layerwise random-sign estimate]\label{lem:layer-KSZ}
Let
\[
 \mathcal N_{N,q}^{(d)}:=\binom Nd(q-1)^d.
\]
There are signs $\varepsilon_\alpha\in\{-1,1\}$, indexed by
$|\supp(\alpha)|=d$, such that
\begin{equation}\label{eq:layer-KSZ}
 \left\|
 \sum_{|\supp(\alpha)|=d}\varepsilon_\alpha x^\alpha
 \right\|_\infty
 \leq 2\sqrt{\mathcal N_{N,q}^{(d)}\bigl(N\log q+\log 8\bigr)}.
\end{equation}
\end{lemma}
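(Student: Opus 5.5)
The plan is a standard probabilistic argument: Hoeffding's inequality at each point, then a union bound over the finite group. I will choose the signs $\varepsilon_\alpha$ independently and uniformly at random in $\{-1,1\}$, and put
\[
 F_\varepsilon(x):=\sum_{|\supp(\alpha)|=d}\varepsilon_\alpha x^\alpha,
 \qquad x\in C_q^N .
\]
First I will check the count. The number of indices with $|\supp(\alpha)|=d$ is exactly $\mathcal N:=\mathcal N_{N,q}^{(d)}=\binom Nd(q-1)^d$: choose the support, then a nonzero exponent in $\{1,\ldots,q-1\}$ on each coordinate of it. Since $C_q^N$ is finite, with $q^N$ points, a union bound over $x$ will suffice, and no net argument is needed.

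Next, fix $x\in C_q^N$. Each $x^\alpha$ is a unimodular complex number, so I will treat the real and imaginary parts of $F_\varepsilon(x)$ separately. Each is a Rademacher sum $\sum_\alpha\varepsilon_\alpha c_\alpha$ with real coefficients satisfying $|c_\alpha|\le1$. Hoeffding's inequality gives
\[
 \mathbb P\bigl(|\operatorname{Re}F_\varepsilon(x)|>t\bigr)\le 2\exp\!\left(-\frac{t^2}{2\sum_\alpha c_\alpha^2}\right)\le 2e^{-t^2/(2\mathcal N)},
\]
and the same bound holds for the imaginary part. If both parts are at most $t$, then $|F_\varepsilon(x)|\le\sqrt2\,t$. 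Hence
\[
 \mathbb P\bigl(|F_\varepsilon(x)|>\sqrt2\,t\bigr)\le 4e^{-t^2/(2\mathcal N)}.
\]

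Summing over the $q^N$ points of $C_q^N$, the probability that $\|F_\varepsilon\|_\infty>\sqrt2\,t$ is at most $4q^Ne^{-t^2/(2\mathcal N)}$. I will choose $t$ so that this equals $1/2$, that is,
\[
 t^2=2\mathcal N\log(8q^N)=2\mathcal N\bigl(N\log q+\log 8\bigr).
\]
With this choice some sign pattern satisfies
\[
 \|F_\varepsilon\|_\infty\le\sqrt2\,t=2\sqrt{\mathcal N\bigl(N\log q+\log8\bigr)},
\]
which is exactly \eqref{eq:layer-KSZ}.

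The only real difficulty is bookkeeping of constants: the factor $\sqrt2$ from splitting into real and imaginary parts, and the factor $4$ from the two two-sided tail bounds. These combine to give the stated $2$ and the $\log 8$. Had the constant failed to match, I would fall back on complex Hoeffding via random phases; here, however, the computation lands exactly on the stated bound.
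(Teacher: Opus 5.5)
Your proposal is correct and is essentially the paper's proof. Both choose independent random signs and apply Hoeffding separately to the real and imaginary parts at each point; the paper's tail $4e^{-t^2/(4\mathcal N)}$ is your bound with $t$ replaced by $\sqrt2\,t$. Both then take a union bound over the $q^N$ points of $C_q^N$, with the threshold chosen so that the failure probability is at most $1/2$, and your constants land exactly on $2\sqrt{\mathcal N_{N,q}^{(d)}(N\log q+\log 8)}$.
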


\begin{proof}
Choose the signs independently and symmetrically.  For a fixed choice of signs, define
\[
 F_\varepsilon:C_q^N\longrightarrow\C,
 \qquad
 F_\varepsilon(x)
 :=\sum_{|\supp(\alpha)|=d}\varepsilon_\alpha x^\alpha.
\]
The real and imaginary parts are sums of independent mean-zero random
variables, each bounded in absolute value by $1$.  Hoeffding's inequality \cite{Hoeffding}
therefore gives, for $t>0$,
\[
 \mathbb P\bigl(|F_\varepsilon(x)|>t\bigr)
 \leq
 4\exp\left(-\frac{t^2}{4\mathcal N_{N,q}^{(d)}}\right).
\]
There are exactly $q^N$ points in $C_q^N$.  Hence the union bound yields
\[
 \mathbb P\bigl(\|F_\varepsilon\|_\infty>t\bigr)
 \leq
 4q^N\exp\left(-\frac{t^2}{4\mathcal N_{N,q}^{(d)}}\right).
\]
With
\[
 t=2\sqrt{\mathcal N_{N,q}^{(d)}\bigl(N\log q+\log 8\bigr)},
\]
the right-hand side is at most $1/2$.  Thus at least one choice of signs
satisfies \eqref{eq:layer-KSZ}.
\end{proof}

{\color{black}We emphasize that the random-sign construction itself is standard; the new
input in the asymptotic comparison below is the subexponential control from
Theorem~A in the cyclic interaction-order setting.}

\begin{theorem}[Interaction Bohr radius]\label{thm:bohr-radius}
For every $q\geq2$ and $1\leq d\leq N$,
\begin{equation}\label{eq:bohr-two-sided}
 {\BHint{d}{q}}^{-1/d}
 \binom Nd^{-\frac{d-1}{2d^2}}
 (q-1)^{-\frac{d-1}{2d}}
 \leq R_{N,q}^{(d)}
 \leq
 \left(2\sqrt{N\log q+\log 8}\right)^{1/d}
 \binom Nd^{-\frac1{2d}}
 (q-1)^{-1/2}.
\end{equation}
Moreover, for fixed $q$, if
\begin{equation}\label{eq:bohr-general-regime}
 d=d(N)\to\infty,
 \qquad d=o(N),
\end{equation}
then
\begin{equation}\label{eq:bohr-general-asymptotic}
 \boxed{
 R_{N,q}^{(d)}
 =(1+o(1))\,N^{1/(2d)}
 \sqrt{\frac{d}{e(q-1)N}}.}
\end{equation}
\end{theorem}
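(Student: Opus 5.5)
The plan is to use the identity \eqref{eq:bohr-sidon-identity}, $R_{N,q}^{(d)}=(S_{N,q}^{(d)})^{-1/d}$. This turns the two-sided estimate \eqref{eq:bohr-two-sided} into an upper bound for the layer Sidon constant $S_{N,q}^{(d)}$, which gives the lower bound for $R$, and a lower bound for $S_{N,q}^{(d)}$, which gives the upper bound for $R$. Throughout, write $\mathcal N:=\mathcal N_{N,q}^{(d)}=\binom Nd(q-1)^d$. This is the exact number of Fourier indices $\alpha$ with $|\supp\alpha|=d$: choose the support, then one of the $q-1$ nonzero exponents in each coordinate of the support.

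\emph{Lower bound for $R$.} Every $f\in\mathcal H_{N,q}^{(d)}$ lies in $\mathcal V_{\leq d}(C_q^N)$. I will apply H\"older's inequality to the $\mathcal N$ coefficients with exponents $p_d$ and $p_d'$. Since $1-1/p_d=(d-1)/(2d)$, and then using the definition \eqref{eq:interaction-BH-intro} of $\BHint{d}{q}$, this gives
\[
 \sum_{|\supp\alpha|=d}|\widehat f(\alpha)|
 \leq \mathcal N^{\frac{d-1}{2d}}\Bigl(\sum|\widehat f(\alpha)|^{p_d}\Bigr)^{1/p_d}
 \leq \mathcal N^{\frac{d-1}{2d}}\,\BHint{d}{q}\,\|f\|_\infty .
\]
Hence $S_{N,q}^{(d)}\leq \BHint{d}{q}\binom Nd^{\frac{d-1}{2d}}(q-1)^{\frac{d-1}{2}}$. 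Taking the power $-1/d$ yields exactly the left inequality in \eqref{eq:bohr-two-sided}.

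\emph{Upper bound for $R$.} Let $F$ be the random-sign function from Lemma~\ref{lem:layer-KSZ}. It belongs to $\mathcal H_{N,q}^{(d)}$, and its coefficient sum is $\mathcal N$. Therefore
\[
 S_{N,q}^{(d)}\geq \frac{\mathcal N}{2\sqrt{\mathcal N(N\log q+\log 8)}}
 =\frac{\sqrt{\mathcal N}}{2\sqrt{N\log q+\log 8}} .
\]
Raising to the power $-1/d$ gives $R\leq (2\sqrt{N\log q+\log8})^{1/d}\mathcal N^{-1/(2d)}$. Since $\mathcal N^{-1/(2d)}=\binom Nd^{-1/(2d)}(q-1)^{-1/2}$, this is the right inequality in \eqref{eq:bohr-two-sided}.

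\emph{Asymptotics.} Assume $d\to\infty$ and $d=o(N)$. The key input, and the step I expect to require the most care, is the estimate
\[
 \binom Nd^{1/d}=(1+o(1))\,\frac{eN}{d}.
\]
To prove it, write $\binom Nd=\frac{N^d}{d!}\prod_{i<d}(1-i/N)$. Stirling's formula gives $\log d!=d\log d-d+O(\log d)$. For the product, $\log\prod_{i<d}(1-i/N)=-O(d^2/N)$, which is valid because $d/N\to0$. Both error terms are $o(d)$, so dividing the logarithm by $d$ proves the estimate. I will now insert it into the two bounds.

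For the lower bound, Theorem~A gives $(\BHint{d}{q})^{-1/d}=\exp(-O(\sqrt{\log d/d}))=1+o(1)$. Also $(q-1)^{-(d-1)/(2d)}=(1+o(1))(q-1)^{-1/2}$. For the binomial factor, split
\[
 \binom Nd^{-\frac{d-1}{2d^2}}=\binom Nd^{-\frac1{2d}}\binom Nd^{\frac1{2d^2}} .
\]
The first factor is $(1+o(1))\sqrt{d/(eN)}$. The second factor equals $\bigl((1+o(1))eN/d\bigr)^{1/(2d)}=(1+o(1))N^{1/(2d)}$, since $e^{1/(2d)}\to1$ and $d^{-1/(2d)}\to1$.

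For the upper bound, $(2\sqrt{N\log q+\log8})^{1/d}=N^{1/(2d)}\bigl(2\sqrt{\log q+\log 8/N}\bigr)^{1/d}=(1+o(1))N^{1/(2d)}$. The factor $\binom Nd^{-1/(2d)}(q-1)^{-1/2}$ is again $(1+o(1))\sqrt{d/(e(q-1)N)}$.

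Both sides therefore equal $(1+o(1))N^{1/(2d)}\sqrt{d/(e(q-1)N)}$, which proves \eqref{eq:bohr-general-asymptotic}.
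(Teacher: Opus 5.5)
Your proposal is correct and follows essentially the same route as the paper. The lower bound comes from H\"older's inequality and the definition of $\BHint{d}{q}$, the upper bound from the random-sign function of Lemma~\ref{lem:layer-KSZ}, and the asymptotics from Stirling's formula; the differences are cosmetic: you pass through the Sidon identity $R_{N,q}^{(d)}=(S_{N,q}^{(d)})^{-1/d}$ and evaluate both bounds separately, whereas the paper argues directly with admissible $r$, shows the ratio of the two bounds tends to $1$, and then evaluates the upper bound.
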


\begin{proof}
There are exactly
\[
 \mathcal N_{N,q}^{(d)}=\binom Nd(q-1)^d
\]
characters of interaction order $d$.  Set $p_d=2d/(d+1)$.  H\"older's
inequality and the definition of $\BHint{d}{q}$ give, for
$f\in\mathcal H_{N,q}^{(d)}$,
\begin{align*}
 \sum_{|\supp(\alpha)|=d}|\widehat f(\alpha)|
 &\leq \bigl(\mathcal N_{N,q}^{(d)}\bigr)^{\frac{d-1}{2d}}
 \left(\sum_{|\supp(\alpha)|=d}
 |\widehat f(\alpha)|^{p_d}\right)^{1/p_d}\\
 &\leq
 \BHint{d}{q}\,
 \bigl(\mathcal N_{N,q}^{(d)}\bigr)^{\frac{d-1}{2d}}\|f\|_\infty.
\end{align*}
Hence every $r$ such that
\[
 r^d\BHint{d}{q}\bigl(\mathcal N_{N,q}^{(d)}\bigr)^{\frac{d-1}{2d}}\leq1
\]
is admissible in \eqref{eq:bohr-radius-definition}.  Taking $d$th roots
gives the lower estimate in \eqref{eq:bohr-two-sided}.

For the reverse estimate, choose $F$ as in Lemma~\ref{lem:layer-KSZ}.  All its
$\mathcal N_{N,q}^{(d)}$ Fourier coefficients have modulus one.  If $r$ is admissible
in \eqref{eq:bohr-radius-definition}, then
\[
 r^d\mathcal N_{N,q}^{(d)}
 \leq\|F\|_\infty
 \leq2\sqrt{\mathcal N_{N,q}^{(d)}\bigl(N\log q+\log8\bigr)}.
\]
Therefore
\[
 r\leq
 \left(2\sqrt{N\log q+\log8}\right)^{1/d}
 \bigl(\mathcal N_{N,q}^{(d)}\bigr)^{-1/(2d)},
\]
which is the upper estimate in \eqref{eq:bohr-two-sided}.

We compare the two bounds under \eqref{eq:bohr-general-regime}.  Write
$L_{N,d,q}$ and $U_{N,d,q}$ for the left- and right-hand sides of
\eqref{eq:bohr-two-sided}.  Their quotient has the exact form
\begin{equation}\label{eq:bohr-ratio-bounds}
 \frac{U_{N,d,q}}{L_{N,d,q}}
 =
 \frac{\bigl(2\sqrt{N\log q+\log8}\,\BHint{d}{q}\bigr)^{1/d}}
 {\bigl(\mathcal N_{N,q}^{(d)}\bigr)^{1/(2d^2)}}.
\end{equation}
Theorem~A gives
\[
 \frac{\log\BHint{d}{q}}{d}=o(1).
\]
Since $q$ is fixed,
\[
 \frac1d\log\left(2\sqrt{N\log q+\log8}\right)
 =\frac{\log N}{2d}+o(1).
\]
Moreover, $d=o(N)$ and Stirling's formula give
\begin{equation}\label{eq:binomial-log-asymptotic-bohr}
 \frac1d\log\binom Nd
 =\log\frac Nd+1+o(1).
\end{equation}
Consequently,
\begin{align*}
 \frac1{2d^2}\log \mathcal N_{N,q}^{(d)}
 &=\frac1{2d}\left(
   \log\frac Nd+1+\log(q-1)+o(1)\right).
\end{align*}
Taking logarithms in \eqref{eq:bohr-ratio-bounds}, the terms involving
$\log N/(2d)$ cancel, and we obtain
\[
 \log\frac{U_{N,d,q}}{L_{N,d,q}}
 =\frac{\log d-1-\log(q-1)}{2d}+o(1)=o(1).
\]
Thus $U_{N,d,q}/L_{N,d,q}\to1$.

Finally, \eqref{eq:binomial-log-asymptotic-bohr} implies
\[
 \binom Nd^{-1/(2d)}
 =(1+o(1))\sqrt{\frac{d}{eN}},
\]
while
\[
 \left(2\sqrt{N\log q+\log8}\right)^{1/d}
 =(1+o(1))N^{1/(2d)}.
\]
Substituting these relations into the upper bound in
\eqref{eq:bohr-two-sided} proves \eqref{eq:bohr-general-asymptotic}.
\end{proof}

Thus the role of Theorem~A in this application is precisely at the
$d$th-root scale: its subexponential constant becomes asymptotically invisible,
while the layer cardinality and the random-sign estimate retain the leading
$q$- and dimension-dependent terms.

The general formula \eqref{eq:bohr-general-asymptotic} contains several
joint growth regimes.  Two contrasting specializations are particularly
transparent: in one, the interaction order lies well above the logarithmic
scale, while in the other it lies precisely on that scale.  We record them
together to make the distinction visible.

\begin{corollary}[Two contrasting Bohr regimes]\label{cor:bohr-complementary-regimes}
Fix $q\geq2$ and suppose $d=d(N)\to\infty$ with $d=o(N)$.  Then the following
two specializations of Theorem~\ref{thm:bohr-radius} hold:
\[
\begin{array}{c@{\qquad\qquad}c}
\boxed{\ \log N=o(d)\ }
&
\boxed{\ d\sim\tau\log N,\quad 0<\tau<\infty\ }\\[2mm]
\displaystyle
R_{N,q}^{(d)}
=(1+o(1))\sqrt{\frac{d}{e(q-1)N}}
&
\displaystyle
R_{N,q}^{(d)}
=(1+o(1))e^{1/(2\tau)}
\sqrt{\frac{d}{e(q-1)N}}.
\end{array}
\]
In particular, at the critical scale $d\sim\log N$,
\begin{equation}\label{eq:bohr-critical-scale}
 R_{N,q}^{(d)}
 =(1+o(1))\sqrt{\frac{\log N}{(q-1)N}}.
\end{equation}
\end{corollary}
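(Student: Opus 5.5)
The plan is to specialize the asymptotic formula \eqref{eq:bohr-general-asymptotic} of Theorem~\ref{thm:bohr-radius}. Both regimes satisfy its hypotheses $d\to\infty$ and $d=o(N)$, so in each case
\[
 R_{N,q}^{(d)}=(1+o(1))\,N^{1/(2d)}\sqrt{\frac{d}{e(q-1)N}} .
\]
It therefore suffices to find the limit of the single factor $N^{1/(2d)}=\exp\bigl(\log N/(2d)\bigr)$ and absorb it into the $(1+o(1))$ term or into the explicit constant.

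In the first regime, $\log N=o(d)$, so $\log N/(2d)\to0$ and $N^{1/(2d)}=1+o(1)$. The product of two $(1+o(1))$ factors is again $1+o(1)$, which gives the left box.

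In the second regime, $d\sim\tau\log N$ with $0<\tau<\infty$, so $\log N/d\to1/\tau$. Continuity of the exponential then gives $N^{1/(2d)}\to e^{1/(2\tau)}$, that is, $N^{1/(2d)}=(1+o(1))e^{1/(2\tau)}$, which gives the right box.

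For the critical scale \eqref{eq:bohr-critical-scale}, take $\tau=1$. The right box becomes $(1+o(1))e^{1/2}\sqrt{d/(e(q-1)N)}=(1+o(1))\sqrt{d/((q-1)N)}$, and replacing $d$ by $(1+o(1))\log N$ inside the square root changes the expression only by a factor $1+o(1)$. This yields $\sqrt{\log N/((q-1)N)}$.

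There is no real obstacle here. The only point to check is that the hypotheses of Theorem~\ref{thm:bohr-radius} are assumed in the corollary and that $d\sim\tau\log N$ is compatible with $d=o(N)$, which it is automatically. All the analytic content, including the fact that $\BHint{d}{q}^{1/d}\to1$, is already contained in \eqref{eq:bohr-general-asymptotic}.
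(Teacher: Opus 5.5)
Your proposal is correct and follows the paper's proof essentially verbatim: both specialize \eqref{eq:bohr-general-asymptotic} by computing the limit of the single factor $N^{1/(2d)}=\exp(\log N/(2d))$ in each regime, and for $\tau=1$ use $e^{1/(2\tau)}$ to cancel the $e^{-1/2}$. Your explicit remark that replacing $d$ by $(1+o(1))\log N$ under the square root costs only a factor $1+o(1)$ spells out a step the paper leaves implicit.
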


\begin{proof}
Both statements follow directly from \eqref{eq:bohr-general-asymptotic}.
If $\log N=o(d)$, then $N^{1/(2d)}\to1$.  If
$d\sim\tau\log N$, then
\[
 N^{1/(2d)}
 =\exp\left(\frac{\log N}{2d}\right)
 \longrightarrow e^{1/(2\tau)}.
\]
For $\tau=1$, this factor cancels the $e^{-1/2}$ appearing in
\eqref{eq:bohr-general-asymptotic}, which gives
\eqref{eq:bohr-critical-scale}.
\end{proof}

\begin{remark}
The two cases in Corollary~\ref{cor:bohr-complementary-regimes} are not separate
hypotheses for the main Bohr theorem.  They are contrasting specializations
of the single asymptotic formula \eqref{eq:bohr-general-asymptotic}: the first
describes interaction orders above the logarithmic scale, where the factor
$N^{1/(2d)}$ disappears asymptotically, while the second describes the
logarithmic scale itself, where that factor leaves the explicit correction
$e^{1/(2\tau)}$.  The condition $d=o(N)$ is common to both regimes.  The sublogarithmic range
$d=o(\log N)$ is also covered by \eqref{eq:bohr-general-asymptotic}; in that
range the factor $N^{1/(2d)}$ must be retained.
\end{remark}

\appendix
\section{Potts hypercontractivity: normalization and proof of Proposition~\ref{prop:Potts-body}}\label{app:potts-details}

{\color{black}
The purpose of this appendix is only to justify, with the normalization used in
this paper, Proposition~\ref{prop:Potts-body}.  The general ingredients are
classical.  We use the sharp log--Sobolev constant for the complete graph,
computed by Diaconis and Saloff-Coste \cite{DSC}, its standard tensorization to
product spaces, and Gross's equivalence between log--Sobolev inequalities and
hypercontractivity \cite{Gross}.  The same complete-graph/Potts normalization
also appears in Gu and Polyanskiy \cite{GuPolyanskiy}.  Rather than reproducing
these general results, we record the short chain of identities needed to
translate them into the parameter $\rho$ used in Section~\ref{sec:potts}.

\subsection{The one-coordinate operator and its continuous-time normalization}

For $0\leq\rho\leq1$, recall from \eqref{eq:Potts-operator} that
\[
 (T_\rho g)(a)
 =\rho g(a)+(1-\rho)\int_{C_q}g\,d\mu_q.
\]
It is useful to make the associated transition kernel explicit.  Define
\[
 K_\rho:C_q\times C_q\longrightarrow[0,1]
\]
by
\[
 K_\rho(a,b)
 :=\rho\mathbf 1_{\{a\}}(b)+\frac{1-\rho}{q}.
\]
For each fixed $a\in C_q$,
\[
 \sum_{b\in C_q}K_\rho(a,b)=1,
\]
so $K_\rho(a,\cdot)$ is a probability measure on $C_q$, and
\[
 (T_\rho g)(a)=\sum_{b\in C_q}K_\rho(a,b)g(b).
\]
Equivalently,
\begin{equation}\label{eq:appendix-kernel-short}
 K_\rho(a,b)
 =
 \begin{cases}
 \displaystyle \frac1q+\frac{q-1}{q}\rho,&a=b,\\[2mm]
 \displaystyle \frac{1-\rho}{q},&a\neq b.
 \end{cases}
\end{equation}

Let $\Pi$ denote the averaging projection
\[
 \Pi g:=\left(\int_{C_q}g\,d\mu_q\right)\mathbf 1.
\]
Then
\begin{equation}\label{eq:appendix-Trho-Pi-short}
 T_\rho=\Pi+\rho(I-\Pi).
\end{equation}
Now consider the continuous-time complete-graph generator
\[
 \mathcal L:=-\frac q{q-1}(I-\Pi),
 \qquad P_t:=e^{t\mathcal L}.
\]
Since $\Pi$ and $I-\Pi$ are complementary projections,
\begin{equation}\label{eq:appendix-Pt-Trho-short}
 P_t
 =\Pi+e^{-qt/(q-1)}(I-\Pi)
 =T_{\rho(t)},
 \qquad
 \rho(t):=e^{-qt/(q-1)}.
\end{equation}
For $N$ coordinates we use the product semigroup
$P_t^{\otimes N}$, whose generator is the sum of the one-coordinate
generators.  Formula \eqref{eq:appendix-Pt-Trho-short} gives
\[
 P_t^{\otimes N}=T_{\rho(t)}^{\otimes N}.
\]

\subsection{The log--Sobolev input and Gross's theorem}

Write $\mathcal E$ for the Dirichlet form associated with $\mathcal L$.  The
sharp complete-graph log--Sobolev inequality of Diaconis--Saloff-Coste
\cite{DSC}, translated to the normalization above, is
\begin{equation}\label{eq:appendix-LSI-short}
 \operatorname{Ent}_{\mu_q}(h^2)
 \leq \frac1{\lambda_q}\,\mathcal E(h,h),
\end{equation}
where
\begin{equation}\label{eq:appendix-lambda-short}
 \lambda_2=1,
 \qquad
 \lambda_q=\frac{q-2}{(q-1)\log(q-1)}\quad(q\geq3).
\end{equation}
The constant is sharp.  The standard tensorization property of entropy and
log--Sobolev inequalities gives, with the same constant, the product estimate
\[
 \operatorname{Ent}_{\mu_q^N}(h^2)
 \leq \frac1{\lambda_q}\,\mathcal E^{(N)}(h,h),
\]
for every $N\geq1$; see, for instance, \cite{DSC}.  Gross's theorem
\cite{Gross} then yields, for real-valued functions,
\begin{equation}\label{eq:appendix-Gross-short}
 \|P_t^{\otimes N}f\|_2\leq\|f\|_p
 \qquad\text{whenever}\qquad
 e^{4\lambda_qt}\geq\frac1{p-1},
 \quad 1<p\leq2.
\end{equation}
{\color{black}The same estimate holds for complex-valued $f$.  Indeed,
$P_t^{\otimes N}$ is a positive Markov operator, hence
$|P_t^{\otimes N}f|\leq P_t^{\otimes N}|f|$ pointwise; applying the real
inequality to the nonnegative function $|f|$ gives the complex estimate with
the same constant.}
We record the conversion to the noise parameter because this is the only
normalization-sensitive point used later.  Put
\[
 t_p:=\frac1{4\lambda_q}\log\frac1{p-1}.
\]
Then \eqref{eq:appendix-Pt-Trho-short} gives
\begin{align*}
 \rho(t_p)
 &=\exp\left(-\frac q{q-1}t_p\right)\\
 &=\exp\left(
 -\frac{q}{4(q-1)\lambda_q}\log\frac1{p-1}
 \right)\\
 &=(p-1)^{\gamma_q},
\end{align*}
where
\[
 \gamma_q:=\frac{q}{4(q-1)\lambda_q}
 =
 \begin{cases}
 \frac12,&q=2,\\[1mm]
 \displaystyle\frac{q\log(q-1)}{4(q-2)},&q\geq3,
 \end{cases}
\]
which is exactly the value fixed in \eqref{eq:gamma-cq-intro}.  Hence
\eqref{eq:appendix-Gross-short} gives
\begin{equation}\label{eq:appendix-rho0-short}
 \|T_{(p-1)^{\gamma_q}}^{\otimes N}f\|_2\leq\|f\|_p.
\end{equation}
If $0\leq\rho\leq(p-1)^{\gamma_q}$, write
$\rho=\eta(p-1)^{\gamma_q}$ with $0\leq\eta\leq1$.  On the Fourier basis,
$T_\eta^{\otimes N}$ multiplies $x^\alpha$ by $\eta^{s(\alpha)}$, and
therefore Parseval gives
\[
 \|T_\eta^{\otimes N}g\|_2\leq\|g\|_2.
\]
Combining this with \eqref{eq:appendix-rho0-short} proves
\[
 \|T_\rho^{\otimes N}f\|_2\leq\|f\|_p
 \qquad
 \bigl(0\leq\rho\leq(p-1)^{\gamma_q}\bigr),
\]
which is the first assertion of Proposition~\ref{prop:Potts-body}.

\subsection{The low-interaction consequence}

We finally derive the form used in the recurrence.  Suppose
$\widehat f(\alpha)=0$ whenever $s(\alpha)>k$, and set
$\rho_0=(p-1)^{\gamma_q}$.  By Parseval and the Fourier multiplier formula,
\begin{align*}
 \|T_{\rho_0}^{\otimes N}f\|_2^2
 &=\sum_{s(\alpha)\leq k}
   \rho_0^{2s(\alpha)}|\widehat f(\alpha)|^2\\
 &\geq \rho_0^{2k}
   \sum_{s(\alpha)\leq k}|\widehat f(\alpha)|^2
 =\rho_0^{2k}\|f\|_2^2.
\end{align*}
Together with \eqref{eq:appendix-rho0-short}, this yields
\[
 \rho_0^k\|f\|_2\leq\|f\|_p,
\]
and therefore
\[
 \|f\|_2
 \leq(p-1)^{-\gamma_qk}\|f\|_p.
\]
This is the second assertion of Proposition~\ref{prop:Potts-body}, and
Corollary~\ref{cor:Potts-factor-body} follows by substituting
$p=p_m=2m/(m+1)$ and $k=d-m$.

The appendix has deliberately retained only the normalization-dependent
steps.  The complete-graph log--Sobolev theorem, tensorization, and the
Gross implication are invoked from the standard references above; the
calculations displayed here are precisely those needed to identify our
$T_\rho$ with the continuous-time semigroup and to recover the exponent
$\gamma_q$ appearing in the recurrence.
}

\section*{Author contributions}
D.M.P. proposed the problem and initiated the project.  A.R.J. developed the
main arguments and worked on the proofs.  Both authors discussed the results,
checked the mathematical details, and contributed to the writing and revision
of the manuscript.

\section*{Funding}
D.M.P. was partially supported by CNPq through Grants
406457/2023-9, 305807/2025-0, and 403964/2024-5.  The funding agency had no
role in the development of the results, the preparation of the manuscript,
or the decision to submit the article.

\section*{Data availability}
No data were used for the research described in this article.

\section*{Declaration of competing interest}
The authors declare that they have no known competing financial interests or
personal relationships that could have appeared to influence the work
reported in this article.

\section*{Declaration of generative AI and AI-assisted technologies in the manuscript preparation process}
During the preparation of this work, the authors used ChatGPT (OpenAI) to
assist with exploratory proof development, the organization and exposition
of arguments, literature searches, and consistency checks.  The authors
reviewed and verified the mathematical content, edited the manuscript, and
take full responsibility for its content.

\end{document}